\newtheorem{theorem}{Theorem}
\newtheorem{proposition}[theorem]{Proposition}
\newtheorem{corollary}[theorem]{Corollary}
\newtheorem{lemma}[theorem]{Lemma}
\theoremstyle{definition}
\newtheorem{remark}[theorem]{Remark}
\newcommand{\eqnsection}{
\renewcommand{\theequation}{\thesection.\arabic{equation}}
    \makeatletter
    \csname  @addtoreset\endcsname{equation}{section}
    \makeatother}
\def\e{\mathbf{e}}
\def\g{\mathbf{g}}
\def\E{\mathbb{E}}
\def\N{\mathbb{N}}
\def\R{\mathbb{R}}
\def\Pb{\mathbb{P}}
\def\F{\mathcal{F}}
\newcommand{\equi}{\; \mathop{\sim}\limits}
\def\={{\,\;\mathop{=}\limits^{\text{(law)}}\;\,}}
\def\qed{\hfill$\square$}
\begin{document}

\title[]{Survival and maximum of spectrally negative branching L\'evy processes with absorption}
\author[Christophe Profeta]{Christophe Profeta}

\address{
Universit\'e Paris-Saclay, CNRS, Univ Evry, Laboratoire de Math\'ematiques et Mod\'elisation d'Evry, 91037, Evry-Courcouronnes, France.
 {\em Email} : {\tt christophe.profeta@univ-evry.fr}
  }

\keywords{Branching  process ; Spectrally negative L\'evy process ; Extreme values}

\subjclass[2020]{}

\begin{abstract} 
We consider a spectrally negative branching L\'evy process where the particles undergo dyadic branching and are killed when entering the negative half-plane. 
The purpose of this short note is to give conditions under which this process dies out a.s., and then study the asymptotics of its all-time maximum. 
\end{abstract}

\maketitle

\section{Introduction}

Consider a spectrally negative branching L\'evy process $X$ where the particles undergo dyadic branching at rate $\beta>0$, and are killed when entering the negative  half-line. More precisely, starting at $t=0$ from an initial particle located at $a>0$, the reproduction and dispersion mechanisms are defined as follows: 
\begin{enumerate}
\item { The spatial movements and the branching mechanism are independent.}
\item \emph{Branching}: each particle waits for an exponential random time $\e$ of parameter $\beta$ and, if still alive, splits into two independent particles whose lives start at the location of their ancestor.
\item \emph{Spatial motion}: between branching events, the particles move independently according to a spectrally negative L\'evy process $L$  but are killed when going below~0.
\end{enumerate}
Let us denote by $\textbf{M}$ the all-time maximum location ever reached by a particle during the whole life of the process. The purpose of this paper is to give conditions under which the process dies out a.s. and then study the distribution of the (then finite) random variable~$\textbf{M}$.\\

As explained in \cite{MaSc},  such a model may describe for instance the evolution of a population in which the positions of the particles represent their fitness. The fitness of each individual evolves as a spectrally negative L\'evy process, and when branching, the initial fitness of a child is the same as that of his parent. The presence of negative jumps makes it possible to model diseases or accidents that generate an abrupt drop in the fitness, and individuals die when their fitness goes below 0.
\\

In the following, we shall assume that all the processes and random variables are defined on the same probability space $(\Omega, \F, \Pb)$, and we shall denote  by $\Pb_a$, with an abuse of notation, both the laws of $X$ and $L$ when started from $a$.
For {$\lambda \in \R$, $\lambda \geq0$}, the Laplace exponent of $L$ is given by 
\[\Psi(\lambda) =\ln \E_0\left[e^{\lambda L_1}\right]=d\lambda +  \frac{\eta^2}{2}\lambda^2 + \int_{-\infty}^0 \left(e^{\lambda x}- 1 - \lambda x 1_{\{|x|<1\}}\right) \nu(dx)\]
where $d \in \R$ is the drift coefficient, $\eta \in \R$ the Gaussian coefficient and the L\'evy measure $\nu$ satisfies $ \int_{-\infty}^0 (x^2\wedge 1)\, \nu(dx)<+\infty$. 
We assume that the one-dimensional distributions of $L$ are absolutely continuous, i.e., 
\[\Pb_0(L_t\in dx) \ll dx, \qquad \text{for every }t>0\]
and we exclude the case where $-L$ is a subordinator (in which case $\textbf{M}$ is finite a.s. and equal to $a$).  As a consequence, the function $\Psi$ is strictly convex and tends to $+\infty$ as $\lambda\rightarrow +\infty$. This implies that for any $q\geq0$, the equation $\Psi(\lambda)=q$ admits at most two solutions, and we denote by $\Phi(q)$ the largest one:
\[\Phi(q) = \sup\{\lambda\geq 0,\, \Psi(\lambda)=q\}.\]
More precisely:

\begin{enumerate}
\item When $\Psi^\prime(0^+)\geq0$, we have $\Phi(0)=0$ and the function $\Phi$ is well-defined on $[0,+\infty)$.
\item When $\Psi^\prime(0^+)<0$, we have $\Phi(0)>0$. In this case, $\Psi$ being strictly convex, it admits a unique minimum at $\lambda_\ast$ which is such that $\Psi^\prime(\lambda_\ast)=0$ and $\Psi^{\prime\prime}(\lambda_\ast) = \eta^2 + \int_{-\infty}^{0} x^2 e^{\lambda_\ast x} \nu(dx)>0$. As a consequence, the function  $\Phi$ is well-defined on $[-q_\ast,+\infty)$, and $\Phi(-q_\ast) = \lambda_\ast$.
\end{enumerate}

\noindent
Our first result concerns the survival of this branching process. 
\begin{theorem}\label{theo:1}
The branching process dies out a.s. if and only if $\Psi^\prime(0^+)<0$ and $\beta \leq q_\ast$. In this case, the asymptotics of the extinction time $\zeta$ satisfy:
\begin{equation}\label{eq:asympzeta}
\lim_{t\rightarrow +\infty}\frac{1}{t}\ln \Pb_a(\zeta>t)  = \beta-q_\ast.
\end{equation}
\end{theorem}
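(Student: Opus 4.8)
The plan is to connect the branching process to a single-particle (killed) spectrally negative L\'evy process via the classical many-to-one formula, and then reduce everything to a first-passage problem. Recall that for a branching process with dyadic branching at rate $\beta$, if $N_t$ denotes the set of particles alive at time $t$ (those that have not been absorbed below $0$), then for a nonnegative measurable $f$,
\begin{equation*}
\E_a\Bigl[\sum_{u\in N_t} f(X_u(t))\Bigr] = e^{\beta t}\, \E_a\bigl[f(L_t)\,1_{\{\underline{L}_t \geq 0\}}\bigr],
\end{equation*}
where $\underline{L}_t=\inf_{s\leq t}L_s$. Taking $f\equiv 1$ gives $\E_a[\#N_t]=e^{\beta t}\Pb_a(\underline{L}_t\geq 0)$. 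The survival probability $\Pb_a(\zeta>t)=\Pb_a(\#N_t\geq 1)$ is sandwiched: from below by a second-moment / spinal decomposition argument, and from above directly by the first-moment bound $\Pb_a(\zeta>t)\leq e^{\beta t}\Pb_a(\underline{L}_t\geq 0)$. So the whole theorem hinges on the exponential rate of decay of the persistence probability $\Pb_a(\underline{L}_t\geq 0)$ for a spectrally negative L\'evy process, which is exactly governed by the quantity $q_\ast$ when $\Psi'(0^+)<0$.

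First I would establish the asymptotics $\frac{1}{t}\ln \Pb_a(\underline{L}_t\geq 0)\to -q_\ast$ (in the case $\Psi'(0^+)<0$), and $\to 0$ when $\Psi'(0^+)\geq 0$ (in the latter case $L$ drifts to $+\infty$ or oscillates, so $\Pb_a(\underline L_t \geq 0)$ decays subexponentially, giving $\limsup\frac1t\ln\E_a[\#N_t]\geq\beta>0$ and non-extinction). The standard tool is the exponential change of measure: for $\lambda\geq 0$, $\mathrm{d}\Pb^{(\lambda)}_a/\mathrm{d}\Pb_a\big|_{\mathcal F_t}=e^{\lambda(L_t-a)-\Psi(\lambda)t}$, under which $L$ is again spectrally negative with Laplace exponent $\Psi_\lambda(\cdot)=\Psi(\lambda+\cdot)-\Psi(\lambda)$. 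Choosing $\lambda=\lambda_\ast$ makes $\Psi'_{\lambda_\ast}(0^+)=\Psi'(\lambda_\ast)=0$, i.e.\ $L$ oscillates under $\Pb^{(\lambda_\ast)}$, hence $\Pb^{(\lambda_\ast)}_a(\underline L_t\geq 0)$ decays subexponentially (polynomially, in fact, by fluctuation theory — but I only need $o(e^{\epsilon t})$ and a crude lower bound). Then
\begin{equation*}
\Pb_a(\underline L_t\geq 0)=e^{-\Psi(\lambda_\ast)t}\,\E^{(\lambda_\ast)}_a\bigl[e^{-\lambda_\ast(L_t-a)}1_{\{\underline L_t\geq 0\}}\bigr] = e^{q_\ast t}\,\E^{(\lambda_\ast)}_a\bigl[e^{-\lambda_\ast(L_t-a)}1_{\{\underline L_t\geq 0\}}\bigr],
\end{equation*}
so an upper bound of the right order follows by dropping the exponential weight only where $L_t$ is large, and a matching lower bound follows by restricting to the event $\{L_t\in[0,1],\ \underline L_t\geq 0\}$, whose $\Pb^{(\lambda_\ast)}$-probability decays subexponentially — here is where I use the absolute-continuity assumption to ensure this restricted probability is genuinely positive and decays no faster than polynomially. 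This yields $\frac1t\ln\Pb_a(\underline L_t\geq 0)\to -q_\ast$, hence $\frac1t\ln\E_a[\#N_t]\to\beta-q_\ast$.

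Next, for the extinction dichotomy: if $\beta>q_\ast$ (or $\Psi'(0^+)\geq0$), then $\E_a[\#N_t]\to\infty$ exponentially, but this alone does not prove survival with positive probability; I would add a standard second-moment argument — compute $\E_a[(\#N_t)^2]$ via the many-to-two formula and show $\E_a[(\#N_t)^2]\leq C\,\E_a[\#N_t]^2$ along a suitable sequence, then Paley–Zygmund gives $\Pb_a(\#N_t\geq 1)\geq c>0$, and by a $0$–$1$ type argument (the event of survival is tail-measurable up to the branching structure) the process survives with positive probability. Conversely, if $\Psi'(0^+)<0$ and $\beta\leq q_\ast$, the first-moment bound gives $\Pb_a(\zeta>t)\leq\E_a[\#N_t]=e^{\beta t}\Pb_a(\underline L_t\geq0)\to 0$ when $\beta<q_\ast$ (since the product decays), and when $\beta=q_\ast$ the product $e^{q_\ast t}\Pb_a(\underline L_t\geq0)=\E^{(\lambda_\ast)}_a[e^{-\lambda_\ast(L_t-a)}1_{\{\underline L_t\geq0\}}]\to0$ as well (dominated convergence after checking the integrand vanishes a.s., since $\underline L_\infty=-\infty$ under $\Pb^{(\lambda_\ast)}$), so $\Pb_a(\zeta>t)\to0$: the process dies out a.s. Finally, the precise rate $\eqref{eq:asympzeta}$: the upper bound $\limsup\frac1t\ln\Pb_a(\zeta>t)\leq\beta-q_\ast$ is immediate from the first-moment bound; for the lower bound I would use a spinal/change-of-measure decomposition of the branching process (or a Borel–Cantelli-free argument: restrict to the event that the spine particle stays in a moving corridor and produces at least one surviving descendant), reducing again to the single-particle persistence estimate, so that $\Pb_a(\zeta>t)\geq c(t)e^{(\beta-q_\ast)t}$ with $c(t)$ subexponential.

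The main obstacle I expect is the sharp lower bound $\liminf\frac1t\ln\Pb_a(\zeta>t)\geq\beta-q_\ast$: the first-moment calculation only gives the upper bound, and for branching processes with absorption the survival probability can in principle be much smaller than the expected population size. I would handle this by building a spine (size-biased tree) with the spine particle distributed as the $h$-transform of the killed L\'evy process — but since $L$ oscillates at the critical tilt $\lambda_\ast$, there is no harmonic function forcing the spine to stay positive, so instead I would work on finite horizons, forcing the spine into a window like $[\delta t, 2\delta t]$ at intermediate times and $[0,1]$ at time $t$, counting surviving descendants born along the spine and using a second-moment estimate on that count. Making the two exponential rates match, rather than leaving a gap $\beta-q_\ast$ versus some smaller rate, is the delicate point and will consume most of the work.
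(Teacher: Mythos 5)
Your ``if'' direction and the upper bound in \eqref{eq:asympzeta} agree with the paper in substance: both reduce to the many-to-one bound $\Pb_a(\zeta>t)\leq e^{\beta t}\Pb_a(\tau_0^->t)$ and the decay rate $q_\ast$ of the single-particle persistence probability (the paper cites Kyprianou--Palmowski for the sharp $t^{-3/2}e^{-q_\ast t}$ form; your dominated-convergence argument at $\beta=q_\ast$ under the critical tilt is a valid, slightly more elementary substitute for the pure logarithmic rate). Beyond that point the routes diverge. The paper never touches second moments: it derives a Feynman--Kac-type representation $v(a,t)=e^{\beta t}\E_a[1_{\{\tau_0^->t\}}e^{-\beta\int_0^t v(L_s,t-s)\,ds}]$ by solving the Fredholm equation obtained from the first-branching decomposition, restricts to the two-sided exit event $\{\tau_0^-\wedge\tau_x^+>t\}$ to get a lower bound driven by Bertoin's exit asymptotics $\Pb_a(\tau_0^-\wedge\tau_x^+>t)\sim\kappa e^{-\rho(x)t}$, and then closes both the non-extinction claim (by contradiction) and the lower bound in \eqref{eq:asympzeta} via a single lemma identifying $\lim_{x\to\infty}\rho(x)=q_\ast$. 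This gives both halves simultaneously, with no spine or moment-comparison machinery.

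The concrete gap in your plan is the step ``compute $\E_a[(\#N_t)^2]$ via many-to-two and show $\E_a[(\#N_t)^2]\leq C\E_a[\#N_t]^2$ along a suitable sequence, then Paley--Zygmund gives $\Pb_a(\#N_t\geq1)\geq c>0$.'' In the regime $\Psi'(0^+)<0$, $\beta>q_\ast$ the raw second moment does \emph{not} satisfy such a bound with a $t$-uniform constant: the many-to-two integral $2\beta\int_0^t e^{\beta s}\E_a[1_{\{\tau_0^->s\}}\,m(L_s,t-s)^2]\,ds$, with $m(y,t)=e^{\beta t}\Pb_y(\tau_0^->t)$, is dominated by small $s$ (early splits where one line goes very high and carries most of the mass), and comparing with $\E_a[\#N_t]^2\asymp t^{-3}e^{2(\beta-q_\ast)t}$ produces a ratio that grows polynomially in $t$. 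Paley--Zygmund then gives only $\Pb_a(\#N_t\geq1)\gtrsim t^{-\gamma}$, which does not imply $\Pb_a(\zeta=\infty)>0$. This is a well-known obstruction for branching processes with absorption; the fix is either a truncated second moment (confine particles to a window before applying many-to-two, as in Berestycki--Berestycki--Schweinsberg) or a Galton--Watson comparison à la Kesten. You would also need to be explicit that the absolute-continuity hypothesis enters to make the Bertoin-type exit asymptotics valid, since the paper leans on it exactly there. The same caveat applies to your lower bound for \eqref{eq:asympzeta}: the corridor/spine strategy is plausible but the matching of rates $\beta-q_\ast$ without losing a log or a polynomial order is exactly where the hard work lies; the paper sidesteps this by feeding the exit-time asymptotics directly into its representation formula.
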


{Note that for a standard  L\'evy process $L$,  the condition $\Psi^\prime(0^+)<0$ implies that $L$ converges towards $-\infty$. In other words, each particle of the branching process is attracted in some sense to $-\infty$.} The second condition $\beta\leq q_\ast$ essentially states that the decay of the particles towards $-\infty$ should be fast enough to compensate the reproduction rate $\beta$.\\

When $\Psi^\prime(0^+)<0$ and $\beta \leq q_\ast$,  the above theorem implies that the random variable $\textbf{M}$ is finite, and we are now interested in studying {its tail decay}. To this end, let us introduce the following scale functions $W^{(q)}$ which are defined for  $q\geq 0$ by:
\begin{equation}\label{eq:W}
\int_0^{+\infty}e^{-\lambda x} W^{(q)}(x)dx = \frac{1}{\Psi(\lambda)-q},\qquad \lambda > \Phi(q).
\end{equation}
It is known that $W^{(q)}$ is an increasing function that is null on $(-\infty,0)$. Furthermore, for every $a>0$, the function $q\rightarrow W^{(q)}(a)$ may be extended analytically on $\mathbb{C}$, see Kyprianou \cite[Section 8.3]{Kyp}.\\

\begin{theorem}\label{theo:2}
Assume that $\Psi^\prime(0^+)<0$ and $\beta\leq q_\ast$ so that $\textbf{M}$ is a.s. finite.
\begin{enumerate}
\item If $\beta<q_\ast$, there exists a constant $\kappa_\beta$ independent of the starting point $a$ such that:
\[\Pb_a\left(\textbf{M}\geq x\right) \equi_{x\rightarrow +\infty} \kappa_\beta W^{(-\beta)}(a)  e^{-\Phi(-\beta) x}.\]
\item If $\beta=q_\ast$,  there exists a constant $\kappa_{q_\ast}$ independent of  the starting point $a$ such that:
\[\Pb_a\left(\textbf{M}\geq x\right) \equi_{x\rightarrow +\infty} \kappa_{q_\ast}  \frac{W^{(-q_\ast)}(a) }{x}e^{-\Phi(-q_\ast) x}.\]
\end{enumerate}
\end{theorem}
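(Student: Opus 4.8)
The plan is to derive a nonlinear integral (or differential) equation for the survival function of $\mathbf{M}$ via the classical branching-process "first-event" decomposition, and then analyze its tail using the fluctuation theory of spectrally negative Lévy processes together with a Tauberian/renewal-type argument. Concretely, write $u(a) = u(a,x) := \Pb_a(\mathbf{M} < x)$ for the probability that no particle ever exceeds level $x$, started from $a \in (0,x)$. Conditioning on the first branching event (which happens at rate $\beta$) and on the path of the initial Lévy particle $L$ up to that time, and using the strong branching property, one gets that $u$ solves
\begin{equation*}
u(a) = \E_a\!\left[ e^{-\beta \tau} u(L_\tau)^2 \Un_{\{\tau < T_0 \wedge T_x\}} \right] + \E_a\!\left[ \Un_{\{T_0 < \tau \wedge T_x\}} \right],
\end{equation*}
where $\tau$ is the first branching time, $T_0$ and $T_x$ are the first passage times of $L$ below $0$ and above $x$ respectively, and the quadratic term reflects dyadic branching. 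Differentiating out the exponential clock, $w := 1-u = \Pb_a(\mathbf{M} \geq x)$ should satisfy a boundary value problem of the form $(\mathcal{L} + \beta)w = \beta w^2$ on $(0,x)$ with $w(0+)=0$, $w(x-)=1$, where $\mathcal{L}$ is the generator of $L$; equivalently, an integral equation against the $q$-resolvent kernel of $L$ killed at $\{0,x\}$, with $q=-\beta$. The appearance of the scale function $W^{(-\beta)}$ is then natural, since the resolvent density and two-sided exit quantities of spectrally negative Lévy processes killed outside $(0,x)$ are expressed through $W^{(q)}$.

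The key steps, in order, are: (1) justify the fixed-point equation above rigorously, including finiteness of $\mathbf{M}$ (granted by Theorem~\ref{theo:1}) and measurability/monotonicity of $u$ in $a$ and $x$; (2) linearize. Since $w(a,x) \to 0$ pointwise as $x \to \infty$ for each fixed $a$ (again by a.s. finiteness of $\mathbf{M}$), the quadratic term $\beta w^2$ is asymptotically negligible compared to the linear term, so to leading order $w$ behaves like the solution $h$ of the \emph{linear} problem $(\mathcal{L}+\beta)h = 0$ on $(0,x)$ with $h(0+)=0$, $h(x-)=1$; this linear problem is exactly the two-sided exit problem with killing rate $-\beta$, whose solution is the ratio $W^{(-\beta)}(a)/W^{(-\beta)}(x)$ (for $\beta<q_\ast$ this is legitimate because $-\beta \in (-q_\ast,\infty)$, the domain on which $\Phi$ and $W^{(-\beta)}$ are defined); (3) extract the tail of $W^{(-\beta)}(x)$ as $x\to\infty$. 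When $\beta<q_\ast$ the Cramér-type asymptotics give $W^{(-\beta)}(x)\sim C\, e^{\Phi(-\beta)x}$ because $\Phi(-\beta)>\lambda_\ast$ is a simple root of $\Psi(\lambda)=-\beta$ with $\Psi'(\Phi(-\beta))>0$; when $\beta=q_\ast$ the root is double ($\Phi(-q_\ast)=\lambda_\ast$, $\Psi'(\lambda_\ast)=0$), which produces the extra polynomial correction $W^{(-q_\ast)}(x)\sim C' \, x^{-1} e^{\lambda_\ast x}$ — this is the standard phenomenon of a double pole in the Laplace transform $1/(\Psi(\lambda)+\beta)$, giving a $1/x$ factor; (4) upgrade the leading-order heuristic to a genuine equivalence by a fixed-point/contraction argument: set $w = h + r$, show the remainder $r$ is of strictly smaller order uniformly, using that $\int \beta h^2$ against the resolvent kernel is controlled, and that the operator $\varphi \mapsto \beta\,(\text{resolvent})\,(2h\varphi + \varphi^2)$ is a contraction on a suitable weighted space for $x$ large; (5) identify the constants $\kappa_\beta$, $\kappa_{q_\ast}$ by tracking the proportionality factors through steps (2)–(4), and check they do not depend on $a$ (the $a$-dependence factors cleanly through $W^{(-\beta)}(a)$ because both the linear solution and the corrections are built from the same resolvent kernel, whose $a$-marginal is proportional to $W^{(-\beta)}(a)$ to leading order in $x$).

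The main obstacle I expect is step (4): controlling the nonlinear remainder uniformly in $x$ to show it is genuinely lower-order, not merely $o(1)$ pointwise. The difficulty is that $w(a,x)$ is not small uniformly in $a\in(0,x)$ — near $a=x$ it is close to $1$ — so the quadratic term is not globally negligible, and one must localize: the contribution to $w(a,x)$ coming from the region where the driving particle is near level $x$ is what produces the exponential decay, and there the linearization must be done carefully, perhaps by a change of measure (the $h$-transform by $e^{\Phi(-\beta)L_t - (-\beta) t}$, i.e. the Lévy process conditioned to drift to $+\infty$) so that the problem becomes a nice recurrent/transient renewal problem on the half-line, after which the quadratic term is provably summable. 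A secondary technical point is the analytic continuation of $q\mapsto W^{(q)}(a)$ to $q=-\beta$ when $\beta>0$ (the Laplace-transform definition only applies for $q\geq0$), but this is exactly the extension quoted from Kyprianou \cite[Section 8.3]{Kyp} in the excerpt, so it may be invoked directly; one just needs to know that for $-q_\ast \le q < 0$ the function $W^{(q)}$ remains nonnegative and retains its probabilistic interpretation as the two-sided exit resolvent, which follows from the identity $W^{(q)}(x) = \sum_{n\ge 0} q^n W^{*(n+1)}(x)$ valid in a neighborhood and then by analytic continuation in $x$ via the Cramér transform.
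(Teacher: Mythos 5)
Your plan takes a genuinely different route from the paper, but it contains a gap that would prevent it from being carried through as written.

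The paper first establishes the exact, nonlinear Feynman--Kac representation
$u(a,x)=\E_a\bigl[\Un_{\{\tau_x^+<\tau_0^-\}}\,e^{\beta\int_0^{\tau_x^+}(1-u(L_r,x))\,dr}\bigr]$
(proved via iteration of a Fredholm operator rather than via the generator and a martingale argument, which the paper explicitly declines to use), and then, crucially, does \emph{not} linearize it. Instead it applies the Esscher tilt to $\Pb^{(-\beta)}$ (under which $L$ drifts to $+\infty$) and Bertoin's time-reversal theorem for first passage, converting the integral $\int_0^{\tau_x^+}$ into one over $[0,g_{x-a}^-]$ under the tilted process conditioned to stay positive. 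The limit is then identified as
$\kappa_\beta \propto \E_0^{(-\beta)\uparrow}\bigl[e^{-\beta\int_0^{\infty}\Pb_0(\mathcal{M}\ge L_r)\,dr}\bigr]$,
a \emph{strictly smaller than one} multiplicative constant coming from the accumulated nonlinear killing along the whole path.

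The gap in your plan is at steps (2) and (4). You propose to write $w=h+r$ with $h(a,x)=W^{(-\beta)}(a)/W^{(-\beta)}(x)$ and to show that $r$ is of strictly smaller order. This is false. In the resolvent equation $r=\beta G_\beta^{(x)}[h^2+2hr+r^2]$ (with $G_\beta^{(x)}$ the two-sided-exit resolvent at rate $-\beta$), already the first iterate $\beta G_\beta^{(x)}[h^2]$ is of the \emph{same} order as $h$ as $x\to\infty$: using $g_\beta^{(x)}(a,y)=W^{(-\beta)}(a)W^{(-\beta)}(x-y)/W^{(-\beta)}(x)-W^{(-\beta)}(a-y)$ and the Cram\'er asymptotic $W^{(-\beta)}(z)\sim C e^{\Phi(-\beta)z}$, one finds
$\beta G_\beta^{(x)}[h^2](a,x)\sim \tfrac{\beta C}{\Phi(-\beta)}\,h(a,x)$,
a non-vanishing constant multiple of $h$, not $o(h)$. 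The heuristic ``$w^2\ll w$ pointwise for fixed $a$'' does not transfer to the Green-kernel integral because the kernel puts essentially all the mass near $y\approx x$, which is exactly where $w$ is of order one; this is the obstacle you flag in passing, but your framework still hinges on the remainder being lower order, which it is not. Consequently the linearization gives the correct exponential rate and the correct dependence $W^{(-\beta)}(a)$ on the starting point, but it does not give the correct constant, and the proposed contraction argument would have to produce, not a small remainder, but a full $O(1)$ correction factor. The fix you gesture at in (4) — Esscher transform to make the problem a transient/renewal one — is indeed the right tool and is what the paper uses; but it must replace the linearization rather than merely control its remainder. Concretely, one should keep the exact multiplicative Feynman--Kac functional, tilt, and then (as in the paper) time-reverse so that the limiting functional $\int_0^{\infty}\Pb_0(\mathcal{M}\ge L_r)\,dr$ under $\Pb_0^{(-\beta)\uparrow}$ becomes the object whose finiteness and expectation encode the constant $\kappa_\beta$.

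A minor secondary point: your derivation of the integral equation via ``$(\mathcal{L}+\beta)w=\beta w^2$'' requires the Feynman--Kac theorem for the Lévy generator, which carries its own regularity burden; the paper deliberately avoids this by solving the Fredholm equation directly through iteration and an explicit Gamma/ordered-uniforms computation. Both routes reach the same representation, so this is a matter of taste, but it is worth knowing the more elementary path exists.
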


It might be surprising to observe that there is an extra decreasing factor in the critical case, which implies that the maximum will be smaller than one might expect by simply letting $\beta\uparrow q_\ast$ in the subcritical case. This phenomenon is in accordance with what is known about the extinction time in the Brownian case.  Indeed, if $L$ is a Brownian motion with  drift, the asymptotics of the extinction time $\zeta$ are explicitly given by 
\[\begin{cases}
\displaystyle \Pb_a(\zeta > t) = C_\beta(a) t^{-3/2}e^{(\beta-q_\ast) t}  & \quad \text{if } \beta <q_\ast,\\
\displaystyle \Pb_a(\zeta > t) =  C_{q_\ast}(a)e^{-(3\pi^2 q_\ast t)^{1/3}   } & \quad \text{if } \beta =q_\ast,
\end{cases}\]
where $C_\beta(a)>0$. These asymptotics were obtained through the successive works of Kesten \cite{Kes}, Harris \& Harris \cite{HaHa},  Berestycki, Berestycki \& Schweinsberg \cite{BBS} and Maillard \& Schweinsberg \cite{MaSc}. In particular, we observe the same phenomenon: in the critical case, the branching Brownian motion will die out faster than one might expect by simply passing to the limit in the subcritical case.\\

Let us finally consider the same spectrally negative branching L\'evy process, but without killing. We denote by $\mathcal{M}$ its all-time maximum. Using the invariance by translation of L\'evy processes, we deduce by letting the killing barrier go down to $-\infty$ the following corollary.
\begin{corollary}\label{cor}
The maximum $\mathcal{M}$ of a (free) spectrally negative branching L\'evy process is a.s. finite if $\Psi^\prime(0^+)<0$ and  $\beta\leq q_\ast$. In this case, there exists a constant $\kappa_\beta>0$ such that 
\[\Pb_0\left(\mathcal{M}\geq x\right) \equi_{x\rightarrow +\infty} \kappa_\beta  e^{-\Phi(-\beta) x}.\]
\end{corollary}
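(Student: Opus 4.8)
The plan is to deduce Corollary~\ref{cor} from Theorem~\ref{theo:2}(1) by letting the absorption level recede to $-\infty$. By spatial homogeneity of $L$, the branching process started from $a>0$ and killed below $0$ has the same law as the free branching process started from $0$ once one deletes every particle whose ancestral path ever goes below $-a$; write ${\bf M}_a$ for the all-time maximum of this truncated process, constructed on the same space as $\mathcal M$, so that ${\bf M}_a\leq\mathcal M$ (in fact ${\bf M}_a\uparrow\mathcal M$ as $a\to+\infty$) and, crucially, $\Pb_0({\bf M}_a\geq x)=\Pb_a({\bf M}\geq x+a)$ for all $x$ and $a$. There are two things to prove: that $\mathcal M<+\infty$ a.s.\ and that $\Pb_0(\mathcal M\geq x)\sim\kappa\,e^{-\Phi(-\beta)x}$ with an explicit $\kappa>0$. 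For finiteness I would use a first-moment (many-to-one) bound: $L$ being spectrally negative it creeps upward, so if $\tau_x$ is its first passage above $x$, optional stopping on the martingale $(e^{\Phi(-\beta)L_t+\beta t})_{t\geq0}$ (well defined since $\beta\leq q_\ast$ ensures $\Phi(-\beta)\geq\lambda_\ast>0$) gives $\E_0[e^{\beta\tau_x};\tau_x<+\infty]\leq e^{-\Phi(-\beta)x}$; since the expected number of particles that are the first in their lineage to reach $x$ equals $\E_0[e^{\beta\tau_x};\tau_x<+\infty]$, we get $\Pb_0(\mathcal M\geq x)\leq e^{-\Phi(-\beta)x}\to0$, so $\mathcal M<+\infty$ a.s.\ whenever $\Psi'(0^+)<0$ and $\beta\leq q_\ast$.

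For the precise tail in the subcritical case $\beta<q_\ast$, the lower bound is immediate: for fixed $a$, Theorem~\ref{theo:2}(1) applied to $\Pb_a({\bf M}\geq x+a)$ gives $\liminf_{x\to+\infty}e^{\Phi(-\beta)x}\Pb_0(\mathcal M\geq x)\geq\kappa_\beta W^{(-\beta)}(a)e^{-\Phi(-\beta)a}$, and letting $a\to+\infty$ together with the classical asymptotics $W^{(-\beta)}(a)\sim e^{\Phi(-\beta)a}/\Psi'(\Phi(-\beta))$ (valid because $\Phi(-\beta)>\lambda_\ast$, hence $\Psi'(\Phi(-\beta))>0$) yields $\liminf_x e^{\Phi(-\beta)x}\Pb_0(\mathcal M\geq x)\geq\kappa_\beta/\Psi'(\Phi(-\beta))$. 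For the matching upper bound I would split
\[
\Pb_0(\mathcal M\geq x)=\Pb_0({\bf M}_a\geq x)+\Pb_0(\mathcal M\geq x,\ {\bf M}_a<x)
\]
and note that on the second event some particle $w$ on the stopping line $\mathcal L_a$ of first passages below $-a$, sitting at a position $z_w\leq-a$, must have a subtree reaching $x$; by the strong branching property along $\mathcal L_a$, translation invariance $\Pb_z(\mathcal M\geq x)=\Pb_0(\mathcal M\geq x-z)$, and the first-moment bound,
\[
\Pb_0(\mathcal M\geq x,\ {\bf M}_a<x)\ \leq\ \E_0\Big[\sum_{w\in\mathcal L_a}\Pb_{z_w}(\mathcal M\geq x)\Big]\ \leq\ c(a)\,\Pb_0(\mathcal M\geq x+a)\ \leq\ c(a)\,e^{-\Phi(-\beta)(x+a)},
\]
where $c(a):=\E_0[e^{\beta T^-_{-a}};T^-_{-a}<+\infty]$, with $T^-_{-a}$ the first passage of $L$ below $-a$. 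Now $c(a)<+\infty$ precisely because $\beta<q_\ast$, and the classical closed form of $\E_0[e^{\beta T^-_{-a}};T^-_{-a}<+\infty]$ in terms of the scale functions $W^{(-\beta)}$ and $Z^{(-\beta)}$ displays an exact cancellation of the leading $e^{\Phi(-\beta)a}$ terms, so $c(a)=o(e^{\Phi(-\beta)a})$ as $a\to+\infty$ (the growth of $c(a)$ being dictated by the subdominant root of $\Psi(\lambda)=-\beta$, which lies in $(0,\lambda_\ast)$). Hence $\limsup_{x\to+\infty}e^{\Phi(-\beta)x}\Pb_0(\mathcal M\geq x)\leq\kappa_\beta W^{(-\beta)}(a)e^{-\Phi(-\beta)a}+c(a)e^{-\Phi(-\beta)a}$ for every $a$, and letting $a\to+\infty$ the right-hand side tends to $\kappa_\beta/\Psi'(\Phi(-\beta))$, which matches the lower bound. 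This gives $\Pb_0(\mathcal M\geq x)\equi_{x\to+\infty}\big(\kappa_\beta/\Psi'(\Phi(-\beta))\big)\,e^{-\Phi(-\beta)x}$, i.e.\ Corollary~\ref{cor} with the constant there equal to $\kappa_\beta/\Psi'(\Phi(-\beta))>0$.

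I expect the main obstacle to be exactly this interchange of the limits $x\to+\infty$ and $a\to+\infty$: since Theorem~\ref{theo:2} is stated only pointwise in the starting position, one genuinely needs the quantitative control above on the mass discarded by killing at $-a$, and it is there that the strict inequality $\beta<q_\ast$ enters. At $\beta=q_\ast$ one has $\Phi(-q_\ast)=\lambda_\ast$ and $c(a)$ grows like $e^{\lambda_\ast a}$, so $c(a)e^{-\Phi(-q_\ast)a}$ no longer vanishes and a finer (second-moment or spine) argument would be needed to identify the constant; the a.s.\ finiteness of $\mathcal M$ for all $\beta\leq q_\ast$ is, however, already delivered by the first-moment bound of the first paragraph, and one expects the extra $1/x$ of Theorem~\ref{theo:2}(2), which is caused by the absorption barrier, to disappear once that barrier is pushed to infinity.
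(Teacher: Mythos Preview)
Your finiteness argument via the many-to-one bound is essentially the paper's inequality~(\ref{eq:boundMf}), so that part is fine. For the tail, however, your proof covers only the strict subcritical range $\beta<q_\ast$, whereas the Corollary is stated for all $\beta\leq q_\ast$; you yourself note that your stopping-line remainder $c(a)e^{-\Phi(-\beta)a}$ no longer vanishes when $\beta=q_\ast$, so as written there is a genuine gap at criticality. (For $\beta<q_\ast$ your strategy is sound: the cancellation you invoke in $Z^{(-\beta)}(a)+\tfrac{\beta}{\Phi(-\beta)}W^{(-\beta)}(a)$ indeed gives $c(a)=o(e^{\Phi(-\beta)a})$, and together with Theorem~\ref{theo:2}(1) and the asymptotics $W^{(-\beta)}(a)e^{-\Phi(-\beta)a}\to 1/\Psi'(\Phi(-\beta))$ this yields matching upper and lower bounds.)

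The paper avoids the interchange of limits altogether by \emph{not} bootstrapping from Theorem~\ref{theo:2}. Instead it lets the barrier recede to $-\infty$ directly in the Feynman--Kac representation of Proposition~\ref{prop:M}, obtaining
\[
m(a)=\E_0\!\left[1_{\{\tau_a^+<\infty\}}\,e^{\beta\int_0^{\tau_a^+}(1-m(a-L_s))\,ds}\right],
\]
then applies the Esscher transform at level $-\beta$ and the time-reversal $\tau_a^+\leftrightarrow g_a^-$ to get the exact identity
\[
m(a)=e^{-\Phi(-\beta)a}\,\E_0^{(-\beta)\uparrow}\!\left[e^{-\beta\int_0^{g_a^-} m(L_s)\,ds}\right].
\]
Letting $a\to\infty$, the expectation decreases to $\E_0^{(-\beta)\uparrow}\big[e^{-\beta\int_0^{\infty} m(L_s)\,ds}\big]$, which is strictly positive because $m(z)\leq e^{-\Phi(-\beta)z}$ and $\int_0^\infty e^{-\Phi(-\beta)x}W_{-\beta}^{(0)}(x)\,dx<\infty$ in \emph{both} regimes (the scale function $W_{-\beta}^{(0)}$ is bounded when $\beta<q_\ast$ and grows only linearly when $\beta=q_\ast$). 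Thus the paper's route delivers the pure exponential tail uniformly for $\beta\leq q_\ast$, confirming your guess that the extra $1/x$ in Theorem~\ref{theo:2}(2) is an artefact of the barrier, but doing so without the second-moment or spine argument you anticipated needing.
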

Note that this result was already known in the Brownian case, see \cite[Prop. 4]{BBHSM} where the proof essentially relies on the well-known associated KPP equation.\\

\noindent
We finally mention that we have chosen to state our results in the case of dyadic branching, but our method also applies to more general offspring distributions, assuming at least the existence of a finite third moment. In this case, an extra step would be required, as the integrands in the Feynman-Kac representations (see Propositions \ref{prop:zeta} and \ref{prop:M}) would no longer be linear, and one would have to control the remainders as was done in \cite{Pro}.

\section{Finiteness of the extinction time}

\subsection{Proof of the ``if"  part of Theorem \ref{theo:1}}

Assume that $\Psi^\prime(0^+)<0$ and $\beta\leq q_\ast$. Let us denote by $N_t$ the number of particles alive at time $t$, and $\zeta=\inf\{t\geq0,\; N_t=0\}$ the extinction time. Since $N_t$ is integer-valued, we have, using the  many-to-one formula,
\begin{equation}\label{eq:boundv}
\Pb_a(\zeta> t) =\Pb_a(N_t>0) \leq \E_a[N_t] = \Pb_a(\tau_0^->t) e^{\beta t},
\end{equation}
where, for $x\in \R$, 
\[\tau_x^- = \inf\{t\geq 0,\, L_t\leq x\}. \]
But, from  Kyprianou \& Palmowski \cite[Corollary 4]{KyPa}, there exists a function $\kappa$ such that\footnote{Note that, with the notations of \cite{KyPa}, a spectrally negative L\'evy process such that $\Psi^\prime(0^+)<0$  is of class A with $\gamma=2$.}
\begin{equation}\label{eq:asympT0}
\Pb_a(\tau_0^- >t) \equi_{t\rightarrow +\infty}  \kappa(a)  t^{-3/2} e^{-q_\ast t}.
\end{equation}
Passing to the limit in (\ref{eq:boundv}), we thus deduce that $\lim\limits_{t\rightarrow +\infty}\Pb_a(\zeta> t) =0$ since $\beta\leq q_\ast$. As a consequence, $\Pb_a(\zeta=+\infty) = 0$, i.e., the process dies out a.s.

\subsection{A representation formula for  the extinction time}

To prove the converse part, we shall rely on the following representation formula for the distribution of $\zeta$. 
 \begin{proposition}\label{prop:zeta}
Define $v(a,t) = \Pb_a(\zeta>t)$. Then $v$ admits the representation:
\[
v(a,t)= e^{\beta t} \E_a\left[1_{\{\tau_0^->t\}} e^{ - \beta \int_0^t v(L_{s}, t-s) ds}   \right] .
\]
\end{proposition}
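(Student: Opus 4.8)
The plan is to condition on the first branching event of the initial particle and use the branching property. Let $\e$ be the exponential clock of parameter $\beta$ attached to the initial particle, independent of the driving L\'evy process $L$. We distinguish two cases. If $\e > t$, the initial particle has not yet branched by time $t$, so the population at time $t$ is non-empty if and only if this single particle has not been absorbed, i.e. if and only if $\tau_0^- > t$; this contributes the term $\mathbb{P}_a(\e > t,\, \tau_0^- > t) = e^{-\beta t}\,\mathbb{P}_a(\tau_0^- > t)$ to $v(a,t)$. If $\e = s \leq t$ and $\tau_0^- > s$ (the particle survives up to branching), then at time $s$ the particle sits at $L_s$ and splits into two i.i.d.\ copies of the whole process started afresh from $L_s$; by independence, the population is extinct at time $t$ precisely when both subtrees are extinct at time $t-s$, an event of conditional probability $(1 - v(L_s, t-s))^2$ given $L_s$. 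Hence the survival probability contributed here is $1 - (1 - v(L_s,t-s))^2$.

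Next I would assemble these pieces. Writing $\beta e^{-\beta s}\,ds$ for the law of $\e$ on $[0,t]$ and using the independence of $\e$ and $L$, we obtain
\begin{equation*}
v(a,t) = e^{-\beta t}\,\mathbb{P}_a(\tau_0^- > t) + \int_0^t \beta e^{-\beta s}\, \mathbb{E}_a\!\left[1_{\{\tau_0^- > s\}}\Bigl(1 - (1 - v(L_s,t-s))^2\Bigr)\right] ds .
\end{equation*}
This is a (nonlinear) renewal-type integral equation for $v$. To recover the stated closed form, the idea is to recognize the right-hand side as the Duhamel/Feynman--Kac solution of the linear-looking equation one gets after expanding the square, and then to identify it with $e^{\beta t}\,\mathbb{E}_a[1_{\{\tau_0^- > t\}} \exp(-\beta\int_0^t v(L_s,t-s)\,ds)]$. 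Concretely, I would set $u(a,t) := e^{-\beta t} v(a,t)$ and check that the claimed formula, namely $u(a,t) = \mathbb{E}_a[1_{\{\tau_0^- > t\}} \exp(-\beta \int_0^t v(L_s,t-s)\,ds)]$, satisfies the same integral equation with the same initial data, then invoke uniqueness.

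The cleanest route, which I would actually carry out, is to verify the claimed representation directly by a Feynman--Kac argument: let $w(a,t)$ denote the right-hand side of the proposition, i.e. $w(a,t) = e^{\beta t}\,\mathbb{E}_a[1_{\{\tau_0^- > t\}}\exp(-\beta\int_0^t v(L_s,t-s)\,ds)]$. Differentiating formally in $t$ (or integrating the multiplicative functional against the first jump of an auxiliary Poisson clock of rate $\beta$) shows that $w$ solves $\partial_t w = \mathcal{L} w + \beta w - \beta v\, w$ on $(0,\infty)$ with Dirichlet condition at $0$ and $w(\cdot,0) \equiv 1$, where $\mathcal{L}$ is the generator of $L$ killed at $0$. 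On the other hand, conditioning the branching process on its first split as above shows that $v$ solves the mild form of $\partial_t v = \mathcal{L} v + \beta v - \beta v^2$ with the same boundary/initial data. Since $v$ appears only as a coefficient in the equation for $w$, and $\beta v \cdot w$ with $w$ replaced by $v$ gives exactly $\beta v^2$, the functions $v$ and $w$ solve the same linear Cauchy--Dirichlet problem (with coefficient $\beta - \beta v$), and uniqueness for that linear problem forces $v = w$.

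The main obstacle is rigor rather than strategy: one must make sense of these PDEs in mild/integral form (the L\'evy process need not have a nice generator on classical function spaces), justify the Feynman--Kac representation for the multiplicative functional $\exp(-\beta\int_0^t v(L_s,t-s)\,ds)$ with the time-dependent, merely bounded-measurable potential $v(\cdot,t-s) \in [0,1]$, and establish uniqueness for the resulting linear integral equation — here the boundedness $0 \leq v \leq 1$ and a Gronwall estimate on $[0,t]$ do the job. The absorption at $0$ and the non-smoothness of scale functions are handled entirely through the stopping time $\tau_0^-$, so they pose no extra difficulty once the equation is set up probabilistically via first-split conditioning.
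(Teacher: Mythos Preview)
Your first-branching decomposition and the resulting nonlinear integral equation are exactly the paper's starting point. From there, however, the routes diverge. You propose the classical PDE/Feynman--Kac strategy: define $w$ to be the right-hand side, argue that both $v$ and $w$ solve the same \emph{linear} Cauchy--Dirichlet problem $\partial_t f = \mathcal{L} f + \beta(1-v)f$ (with $v$ treated as a fixed coefficient), and conclude by uniqueness via Gronwall. The paper explicitly names this as ``one classical way'' but deliberately avoids it; instead it rewrites the equation as a Fredholm equation of the second kind, $v = \varphi + T[v]$ with $T[f](x,t) = \E_x[1_{\{\tau_0^->\e,\,t>\e\}}(2-v(L_\e,t-\e))f(L_\e,t-\e)]$, iterates to a Neumann series, proves the remainder $T^{\circ(n)}[v]\to 0$ by a direct bound, and then computes each $T^{\circ(k)}[\varphi]$ explicitly by recognising the iterated exponential times as Gamma-distributed with uniform order statistics conditionally on their sum --- summing the series then produces the exponential functional directly.

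What each approach buys: your route is shorter and conceptually transparent, but, as you yourself flag, it needs a rigorous Feynman--Kac representation for a time-inhomogeneous, merely bounded-measurable potential $v(\cdot,t-s)$ on a general spectrally negative L\'evy process killed at $0$, plus a uniqueness statement for the corresponding mild equation; these are not hard but must be supplied. The paper's series argument is longer and more computational, yet entirely probabilistic: it never invokes the generator, never differentiates in $t$, and the only analytic input is the elementary bound showing the remainder vanishes. In particular it sidesteps precisely the regularity issues you identify as the ``main obstacle''.
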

In the Brownian case, this result was already used in \cite{HaHa} to study the asymptotic tail of $\zeta$. One classical way to prove such representation is to write down a pseudo-differential equation satisfied by $v$ (involving the generator of the L\'evy process $L$), and then apply a Feynman-Kac theorem or a martingale argument. We shall propose here another approach based on solving a Fredholm integral equation of the second kind.

\begin{proof}
We start by applying the Markov property at the first branching event $\e$:
\begin{align*}
1-v(a,t) &= \Pb_a(\zeta \leq t, \,t<\e)+ \Pb_a(\zeta \leq t,\, \tau_0^- \leq \e, t \geq \e) + \Pb_a(\zeta \leq t,\, \tau_0^- > \e,\, t\geq \e)\\
&= \Pb_a(\tau_0^-\leq t,\, t<\e) + \Pb_a(\tau_0^- \leq \e, \,t\geq \e)  + \E_a\left[1_{\{\tau_0^->\e,\, t>\e\}}(1-v(L_\e, t-\e))^2 \right].
\end{align*}
Here, $\tau_0^-$ denotes the first passage time below 0 of the initial particle $L$, and is independent of $\e$. Developing the square yields the integral equation:
\[v(a,t) = e^{-\beta t} \Pb_a(\tau_0^->t) + 2 \E_a\left[1_{\{\tau_0^->\e,\, t>\e\}} v(L_\e, t-\e)\right] - \E_a\left[1_{\{\tau_0^->\e,\, t>\e \}} v^2(L_\e, t-\e)\right].\]
To obtain a representation of $v$ through this equation, we shall write down a Riemann-Liouville series.
Define the linear operator $T$ acting on positive and bounded functions $f:  \R\times[0,+\infty) \rightarrow [0,+\infty)$ by
\[T[f](x,t) =  \E_x\left[1_{\{\tau_0^->\e,\, t>\e\}}  \left(2- v(L_\e, t-\e)\right) f(L_\e, t-\e) \right] \]
so that $v$ is a solution of the Fredhlom integral equation:
\[v(x,t) =  e^{-\beta t}\Pb_x(\tau_0^->t)  + T[v](x,t).\]
Let us set to simplify the notation  $\varphi(x,t) =  e^{-\beta t} \Pb_x(\tau_0^->t)$. By iteration, we deduce that for $n\geq2$,
\[v(x,t)=  \varphi(x,t) + \sum_{k=1}^{n-1} T^{\circ( k)}[\varphi](x,t) + T^{\circ (n)}[v](x,t) \]
where $T^{\circ(k)}$ denotes the $k^{\text{th}}$ composition of $T$ with itself: 
\[T^{\circ (1)} = T\qquad \text{ and for $k\geq 2$,}\qquad T^{\circ(k)}[f] = T^{\circ(k-1)}[T[f]].\]
We now show that for any pair $(x,t)\in (0,+\infty)^2$, 
\[\lim_{n\rightarrow +\infty} T^{\circ (n)}[v](x,t) =0.\]
Using the upper bound (\ref{eq:boundv}) for $v$, we have 
\[
T[v](x,t)\leq2 \E_x\left[e^{\beta (t-\e)} \Pb_{L_\e}(\tau_0^->t- \e)  1_{\{\tau_0^->\e\}} 1_{\{t> \e\}}\right].\]
Applying the Markov property at the time $\e$, this is further equal to 
\begin{align*}
T[v](x,t)\leq 2 \E_x\left[e^{\beta (t-\e)} 1_{\{\tau_0^->t\}} 1_{\{t> \e\}}\right]&= 2e^{\beta t}\Pb_x(\tau_0^->t) \E[e^{-\beta \e}1_{\{t>\e\}}] \\
 &=e^{\beta t}\Pb_x(\tau_0^->t) (1-e^{-2\beta t})   \end{align*}
since $\e$ follows an exponential distribution of parameter $\beta$. By iteration, we deduce that 
\[T^{\circ( n)}[v](x,t)  \leq (1-e^{-2\beta t})^n e^{\beta t}\Pb_x(\tau_0^->t) \xrightarrow[n\rightarrow +\infty]{} 0.\]
As a consequence, we obtain the representation for any $(x,t) \in (0,+\infty)^2$,
\[v(x,t)=  \varphi(x,t) + \sum_{k=1}^{+\infty} T^{\circ (k)}[\varphi](x,t).\]
We then compute the convolutions appearing in the sum.
\begin{lemma}\label{lem:rec}
Let $(\e_k, \, k\in \N)$ be a sequence of i.i.d. exponential r.v. with parameter $\beta$. For any $n\in \N$, we set  $\g_n = \sum_{k=1}^n \e_k$. Then:
\[
T^{\circ (n)}[\varphi](x,t)
=e^{-\beta t }\E_x\left[1_{\{\tau_0^->t\}}1_{\{\g_n\leq t\}}  e^{\beta\g_n} \prod_{i=1}^{n}  (2-v(L_{\g_i}, t-\g_i))  \right].
\]
\end{lemma}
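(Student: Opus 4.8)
The plan is to argue by induction on $n$, with the Markov property of $L$ doing all the work; write $\varphi(y,s)=e^{-\beta s}\Pb_y(\tau_0^->s)$ throughout. For $n=1$ the statement is essentially the definition of $T$: one has
$$T[\varphi](x,t)=\E_x\!\left[1_{\{\tau_0^->\e_1,\,t>\e_1\}}\big(2-v(L_{\e_1},t-\e_1)\big)\,e^{-\beta(t-\e_1)}\,\Pb_{L_{\e_1}}(\tau_0^->t-\e_1)\right],$$
and, since $\e_1$ is independent of $L$ while $\big(2-v(L_{\e_1},t-\e_1)\big)e^{-\beta(t-\e_1)}$ is $\F_{\e_1}$-measurable, conditioning on $\e_1$ and applying the Markov property at that time converts $1_{\{\tau_0^->\e_1\}}\Pb_{L_{\e_1}}(\tau_0^->t-\e_1)$ into $1_{\{\tau_0^->t\}}$ on $\{t>\e_1\}$ (using $\tau_0^-=\e_1+\tau_0^-\circ\theta_{\e_1}$ on $\{\tau_0^->\e_1\}$, together with $\{\tau_0^->t\}\subset\{\tau_0^->\e_1\}$ once $t\ge\e_1$). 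Writing $e^{-\beta(t-\e_1)}=e^{-\beta t}e^{\beta\e_1}$ and $\g_1=\e_1$ then gives exactly the claimed identity for $n=1$.

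For the inductive step, note that $T^{\circ(n+1)}[\varphi]=T\big[T^{\circ(n)}[\varphi]\big]$ (composition being associative), so the outermost $T$ carries the first clock $\e_1$; I would then insert the induction hypothesis for $T^{\circ(n)}[\varphi]$, evaluated at the random point $(L_{\e_1},t-\e_1)$. By the Markov property this brings in a copy of $L$ restarted at $L_{\e_1}$ --- realised as $(L_{\e_1+u})_{u\ge0}$ --- together with $n$ fresh exponential clocks whose partial sums I write $\g'_1<\dots<\g'_n$. The one real computation is a further application of the Markov property at $\e_1$, after which every ingredient reindexes cleanly: the two no-killing events ``$\tau_0^->\e_1$'' (first piece) and ``$\tau_0^->t-\e_1$'' (restarted piece) merge into $\{\tau_0^->t\}$ via $\tau_0^-=\e_1+\tau_0^-\circ\theta_{\e_1}$ on $\{\tau_0^->\e_1\}$; the shifted sums obey $\e_1+\g'_i=\g_{i+1}$, so $\g'_n\le t-\e_1$ becomes $\g_{n+1}\le t$, the points $L_{\e_1+\g'_i}$ become $L_{\g_{i+1}}$ and the times $(t-\e_1)-\g'_i$ become $t-\g_{i+1}$; the exponential factors combine as $e^{-\beta(t-\e_1)}e^{\beta\g'_n}=e^{-\beta(t-\e_1)}e^{\beta(\g_{n+1}-\e_1)}=e^{-\beta t}e^{\beta\g_{n+1}}$; and the leftover factor $2-v(L_{\e_1},t-\e_1)=2-v(L_{\g_1},t-\g_1)$ joins $\prod_{i=2}^{n+1}\big(2-v(L_{\g_i},t-\g_i)\big)$ to build $\prod_{i=1}^{n+1}$. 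This is exactly the claimed identity at level $n+1$, and the induction closes.

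No integrability subtlety ever arises: $v(a,t)=\Pb_a(\zeta>t)\in[0,1]$ forces $1\le 2-v\le 2$, so all the integrands above are bounded and Fubini applies freely. The genuinely delicate point is purely the bookkeeping of the gluing step --- tracking which $\sigma$-algebra the ``past'' data belongs to, carrying out the relabelling $\e_1,(\g'_1,\dots,\g'_n)\mapsto(\g_1,\dots,\g_{n+1})$, and verifying that the successive no-killing constraints concatenate into the single event $\{\tau_0^->t\}$. Everything else is routine. (One could instead unfold $T^{\circ(n)}[\varphi]$ into the iterated integral $\beta^{n}e^{-\beta t}\int_{0<s_1<\dots<s_n<t}\E_x\big[1_{\{\tau_0^->t\}}\prod_{i=1}^n\big(2-v(L_{s_i},t-s_i)\big)\big]\,ds_1\cdots ds_n$ and recognise $\beta^{n}e^{-\beta s_n}$ as the joint density of $(\g_1,\dots,\g_n)$ on the simplex $\{0<s_1<\dots<s_n\}$; this replaces the relabelling by a short Gamma-density computation and yields the same formula.)
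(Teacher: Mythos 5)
Your proof is correct and follows essentially the same route as the paper: induction on $n$, applying the Markov property at the first exponential clock, merging the no-killing indicators via $\tau_0^-=\e_1+\tau_0^-\circ\theta_{\e_1}$ on $\{\tau_0^->\e_1\}$, and relabelling the shifted Gamma sums. The only cosmetic difference is that the paper's definition writes $T^{\circ(k)}=T^{\circ(k-1)}\circ T$ while its proof (and yours) uses $T\circ T^{\circ(k-1)}$; these coincide since powers of a single operator commute.
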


\begin{proof}
For $n=1$, we have applying the Markov property 
\begin{align*}
T[\varphi](x,t) &=  \E_x\left[1_{\{\tau_0^->\e,\, t>\e\}}  \left(2- v(L_\e, t-\e)\right) \Pb_{L_\e}(\tau_0^->t-\e) e^{-\beta (t-\e)}     \right]\\
&= e^{-\beta t} \E_x\left[1_{\{\tau_0^->\e,\, t>\e\}}  \left(2- v(L_\e, t-\e)\right) 1_{\{\tau_0^->t\}} e^{\beta \e}     \right]
\end{align*}
which is the expected result.  Assume now that the formula holds for some $n\in \N$. Applying the Markov property, we have $\Pb_a$ a.s.:
\[
T^{\circ (n)}[\varphi](L_\e,t-\e) =e^{-\beta t } \E_a\left[1_{\{\tau_0^->t\}}1_{\{\e+\g_n\leq t\}}  e^{\beta (\e+\g_n)}\prod_{i=1}^{n}  (2-v(L_{\e+\g_i}, t-\e-\g_i)) \bigg| \F_\e \right]
\]
where $(\F_t, t\geq0)$ denotes the natural filtration of $L$, and the first branching event $\e$ is independent of $L$ and of the sequence $(\g_i, \, 1\leq i\leq n)$, {i.e., of $(\e_k, 1\leq k\leq  n)$.}
Then, using the tower property of conditional expectation, we obtain:
\begin{align*}
T^{\circ (n+1)}[\varphi](x,t) &= \E_x\left[1_{\{\tau_0^->\e,\, t>\e\}}  \left(2- v(L_\e, t-\e)\right) T^{\circ(n)}[\varphi](L_\e, t-\e) \right] \\
&=e^{-\beta t }\E_x\left[1_{\{\tau_0^->t\}}1_{\{\g_{n+1}\leq t\}}  e^{\beta\g_{n+1}} \prod_{i=1}^{n+1}  (2-v(L_{\g_i}, t-\g_i))  \right]
\end{align*}
which proves Lemma \ref{lem:rec} by induction.
\end{proof}
\noindent
Recall now that  for $n\geq1$, the r.v. $\g_n$ is Gamma distributed with parameters $n$ and $\beta$:
\[\Pb\left(\g_n\in dr\right) = \frac{\beta^n}{(n-1)!} r^{n-1} e^{-\beta r}dr, \qquad r>0,\]
and that, conditionally  on $\{\g_n= r\}$, the partial sums are distributed as ordered uniform random variables, i.e., 
\[\left(\g_i, 1\leq i\leq n-1\right)\; \mathop{=}^{\text{(law)}} \;r\times \left(U_i, 1\leq i\leq n-1\right).\]
where the $(U_i)$ are uniform r.v. on $[0,1]$, { independent of everything else,} and  such that $U_1\leq U_2\leq \ldots\leq U_{n-1}$. As a consequence, conditioning on $\{\g_n= r\}$, we deduce that the expectation in Lemma \ref{lem:rec} equals
\[ e^{-\beta t}  \int_0^t  \E_x\left[1_{\{\tau_0^->t\}} (2-v(L_{r}, t-r))   \prod_{i=1}^{n-1}  (2-v(L_{U_i r }, t-U_i r))    \right]  \frac{\beta^n r^{n-1}}{(n-1)!}dr.   \]
Computing the expectations with respect to the $(U_i)$, we obtain:
\[ \frac{\beta^n}{(n-1)!}e^{-\beta t}  \int_0^t  \E_x\left[1_{\{\tau_0^->t\}} (2-v(L_{r}, t-r))  \left(\int_0^r  (2-v(L_{s}, t-s)) ds\right)^{n-1}   \right]  dr.   \]
Finally, plugging this expression into the series representation of $v$ and computing the remaining integral yields the result of Proposition \ref{prop:zeta}:
\begin{align*}
v(a,t) &=  \Pb_a(\tau_0^->t) e^{-\beta t}  +  e^{-\beta t}  \beta \int_0^t  \E_a\left[1_{\{\tau_0^->t\}} (2-v(L_{r}, t-r)) e^{\beta \int_0^r  (2-v(L_{s}, t-s)) ds}   \right] dr  \\
&= \E_a\left[1_{\{\tau_0^->t\}} e^{\beta \int_0^t  (1-v(L_{s}, t-s)) ds}   \right].
\end{align*}
\end{proof}

\subsection{Proof of the ``only if" part of Theorem \ref{theo:1} when $\Psi^\prime(0^+)<0$}

{ We now assume that  $\Psi^\prime(0^+)<0$ and that the branching process dies out a.s., i.e., that $\lim\limits_{t\rightarrow +\infty} v(a,t)=\Pb_a(\zeta=+\infty)=0.$ Then, a standard application of the Markov property shows that we necessarily have $\lim\limits_{t\rightarrow +\infty}v(x,t) = 0$ for any $x>0$, and 
we shall prove that this implies that $\beta\leq q_\ast$. Let us set for $x\in \R$,
\[ \tau_x^+ = \inf\{t\geq 0,\, L_t\geq x\}.\]
Since $x\rightarrow v(x,s)$ is increasing and $L$ has no positive jumps, we have for $x>a$:
\[
v(a,t)
 \geq e^{\beta t} \E_a\left[1_{\{\tau_0^->t\}} e^{ - \beta \int_0^t v(L_{t-s}, s) ds} 1_{\{\tau_x^+>t\}}  \right]\geq e^{\beta t - \beta \int_0^t v(x, s) ds} \Pb_a\left(\tau_0^-\wedge \tau_x^+>t\right).
\]
From  Bertoin \cite[Theorem 2]{BerScale}, since we have assumed that the transition density of $L$ is absolutely continuous, the asymptotics of the exit time $\tau_0^-\wedge \tau_x^+$ are given by 
\begin{equation}\label{eq:asympB}
\Pb_a\left(\tau_0^-\wedge \tau_x^+>t\right) \equi_{t\rightarrow +\infty} \kappa \, e^{-\rho(x) t}
\end{equation}
where $\kappa>0$ and the function $\rho$ is defined by 
\[\rho(x) = \inf\{q\geq 0,\, W^{(-q)}(x)=0\}.\]
Note that $W^{(-q)}(x)$ denotes here the analytic extension of the scale function $q\rightarrow W^{(q)}(x)$ defined in (\ref{eq:W}).
It is also known,  from Lambert \cite[Prop. 5.1]{Lam}, that $\rho$ is continuous and strictly decreasing, hence it converges towards 
\[\lim_{x\rightarrow +\infty} \rho(x) = \inf_{x\geq 0} \rho(x)  = \rho(\infty).\]
Let $\varepsilon>0$ and take $x$ large enough such that
$\rho(x) \leq \rho(\infty)+\varepsilon$. 
By assumption, there exists ${A(x)}>0$ such that for $t\geq {A(x)}$, we have $v(x,t)<\varepsilon$. As a consequence,
\begin{equation}\label{eq:boundv-}
v(a,t) \geq e^{\beta(1-\varepsilon) t - \beta \int_0^{A(x)} v(x,s)ds  +{A(x)} \beta \varepsilon }\Pb_a\left(\tau_0^-\wedge \tau_x^+>t\right).
\end{equation}
Plugging (\ref{eq:asympB}) into (\ref{eq:boundv-}) and passing to the limit as $t\rightarrow +\infty$, we deduce that $\beta(1-\varepsilon) - \rho(x) <0$ which implies that 
$\beta < \rho(\infty)+ (1+\beta) \varepsilon. $}
Taking $\varepsilon$ small enough leads thus to the inequality $\beta \leq \rho(\infty)$ and the result will follow from the fact that $\rho(\infty)=q_\ast$ as proven in the next lemma.
\begin{lemma}\label{lem:KY}
Assume that $\Psi^\prime(0^+)<0$. Then:
\[\rho(\infty) = q_\ast = \sup\{q>0,  \,W^{(-q)}(x)>0\; \text{for every }x>0\}.\]
\end{lemma}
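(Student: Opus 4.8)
The plan is to strip the drift by an Esscher transform, thereby reducing the statement to the analogous one for a \emph{critical} (oscillating) spectrally negative process, and then to estimate that process's chance of remaining in a window of width $x$ over a long time by a diffusive scaling argument. Write $\widetilde\Pb_a$ for the law of $L$ started at $a$ under the change of measure $d\widetilde\Pb|_{\F_t}=e^{\lambda_\ast L_t+q_\ast t}\,d\Pb|_{\F_t}$ (recall $\Psi(\lambda_\ast)=-q_\ast$). Under $\widetilde\Pb$, $L$ is a spectrally negative L\'evy process with Laplace exponent $\widetilde\Psi(\mu)=\Psi(\mu+\lambda_\ast)+q_\ast$; in particular $\widetilde\Psi'(0^+)=\Psi'(\lambda_\ast)=0$ and $\widetilde\sigma^2:=\widetilde\Psi''(0^+)=\Psi''(\lambda_\ast)\in(0,+\infty)$, so $(L,\widetilde\Pb)$ is a square-integrable martingale, it is not the negative of a subordinator, and its one-dimensional laws are still absolutely continuous. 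Comparing the two Laplace transforms in \eqref{eq:W} gives the classical identity (see \cite{Kyp}) $\widetilde W^{(q)}(x)=e^{-\lambda_\ast x}W^{(q-q_\ast)}(x)$, i.e. $W^{(-q)}(x)=e^{\lambda_\ast x}\widetilde W^{(q_\ast-q)}(x)$ for all $q$, where $\widetilde W$ are the scale functions of $(L,\widetilde\Pb)$. Since a scale function of nonnegative index is strictly positive on $(0,+\infty)$, this already shows $W^{(-q)}(x)>0$ on $(0,+\infty)$ whenever $q\le q_\ast$; and, writing $\widetilde\rho(x)=\inf\{r\ge0:\widetilde W^{(-r)}(x)=0\}$ for the analogue of $\rho$ under $\widetilde\Pb$, it gives
$$\rho(x)=q_\ast+\widetilde\rho(x),\qquad \widetilde\rho\ge0 .$$
Hence $\rho(\infty)\ge q_\ast$ and $q_\ast\le\sup\{q>0:W^{(-q)}(x)>0\ \forall x>0\}$, and everything reduces to proving $\widetilde\rho(\infty)=0$ together with the reverse inequality.

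For $\widetilde\rho(\infty)=0$, fix a small $\epsilon>0$, set $T=T(x)=\epsilon^2x^2/\widetilde\sigma^2$ and $J=(\tfrac x2-\epsilon x,\tfrac x2+\epsilon x)$. The crux is an $x$-uniform lower bound: there is $\delta_0>0$ such that for all large $x$ and all $y\in J$,
$$\widetilde\Pb_y\bigl(\tau_0^-\wedge\tau_x^+>T,\ L_T\in J\bigr)\ \ge\ \delta_0 .$$
Indeed, on $J$ one has $\mathrm{dist}(y,\{0,x\})\ge x/3$, so Doob's $L^2$ maximal inequality for the martingale $L$ together with $\widetilde\E_y[(L_T-y)^2]=\widetilde\sigma^2 T$ gives $\widetilde\Pb_y(\tau_0^-\wedge\tau_x^+\le T)\le\widetilde\Pb_y(\sup_{u\le T}|L_u-y|\ge x/3)\le 36\epsilon^2$, which controls staying inside $(0,x)$; and the central limit theorem $L_T/\sqrt T\Longrightarrow N(0,\widetilde\sigma^2)$ (valid since $\widetilde\sigma^2<\infty$ and $\widetilde\E[L_1]=0$), the scaling being chosen so that $\sqrt{\widetilde\sigma^2T}=\epsilon x$ equals the half-width of $J$, shows that $\widetilde\Pb_y(L_T\in J)$ is bounded below by a universal positive constant uniformly in $y\in J$ and $x$ large. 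Taking $\epsilon$ small yields the claim with $\delta_0$ independent of $x$. Iterating it over $\lceil t/T\rceil$ consecutive blocks of length $T$ via the Markov property (starting from $x/2\in J$) gives $\widetilde\Pb_{x/2}(\tau_0^-\wedge\tau_x^+>t)\ge\delta_0^{\,1+t/T}$, and the exit-time asymptotics \eqref{eq:asympB} applied to $(L,\widetilde\Pb)$ then force $\widetilde\rho(x)\le T(x)^{-1}\ln(1/\delta_0)=O(x^{-2})$. As $\widetilde\rho$ is nonincreasing and $\ge0$, $\widetilde\rho(\infty)=0$, so $\rho(\infty)=q_\ast$.

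For the remaining equality we already have ``$\ge q_\ast$''; conversely $\rho$ is continuous and strictly decreasing by Lambert \cite[Prop.\ 5.1]{Lam}, and $\rho(0^+)=+\infty$ because for $x\downarrow0$ the series defining $W^{(-q)}$ gives $W^{(-q)}(x)=W^{(0)}(x)(1+o(1))$ with $W^{(0)}(x)>0$, so $\rho(x)$ eventually exceeds any prescribed level. Thus $\rho$ maps $(0,+\infty)$ onto $(q_\ast,+\infty)$: for any $q>q_\ast$ there is $x_q$ with $\rho(x_q)=q$, hence $W^{(-q)}(x_q)=0$, so $q\notin\{q>0:W^{(-q)}(x)>0\ \forall x>0\}$. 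This gives the reverse inequality and completes the proof. The main obstacle is the $x$-uniform lower bound $\delta_0$ — and within it the anti-concentration estimate guaranteeing that the critical process returns to the central window $J$ whatever its starting point there; this is exactly where the finiteness of $\Psi''(\lambda_\ast)$, i.e. the placement of the window on the diffusive time scale $T\asymp x^2$, is used.
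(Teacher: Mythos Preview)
Your argument is correct and takes a genuinely different route from the paper's. The paper obtains $q_\ast\le\rho(\infty)$ immediately by comparing the half-line survival asymptotics \eqref{eq:asympT0} of Kyprianou--Palmowski with the interval asymptotics \eqref{eq:asympB}, and then proves $\sup\{q>0:W^{(-q)}>0\}\le q_\ast$ by a short Laplace--transform computation: writing $\int_0^\infty e^{-\Phi(\lambda)y}W^{(-q)}(y)\,dy=1/(\lambda+q)$ and recognising $e^{-(\Phi(\lambda)-\Phi(0))y}$ as $\E_0[e^{-\lambda\tau_y^+}\mid\tau_y^+<\infty]$, one inverts in $\lambda$ (using $W^{(-q)}\ge0$) to see that $q$ cannot exceed the top exponential moment of the conditioned $\tau_y^+$, namely $q_\ast$. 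By contrast you strip the drift by the Esscher tilt at $\lambda_\ast$, reduce to $\rho=q_\ast+\widetilde\rho$ for the centred process, and then prove $\widetilde\rho(x)\to0$ by a direct Doob/CLT block argument on the diffusive scale $T\asymp x^2$; the equality with the supremum then comes from the intermediate value theorem on the strictly decreasing continuous function $\rho$.

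What each approach buys: the paper's proof is shorter and purely analytic once one accepts \eqref{eq:asympT0}, whereas yours is more self-contained probabilistically (it never invokes the half-line tail \eqref{eq:asympT0}) and delivers the quantitative extra $\widetilde\rho(x)=O(x^{-2})$. Two small points to tighten in your write-up: (i) for the IVT step you need $\rho(0^+)=+\infty$ \emph{uniformly}, i.e.\ for each fixed $q$ that $W^{(-q')}(x)>0$ for all $q'\in[0,q]$ once $x$ is small --- this follows cleanly from the standard bound $|W^{(-q)}(x)-W(x)|\le W(x)(e^{q\bar W(x)}-1)$ with $\bar W(x)=\int_0^xW$ (see \cite[Ch.~8]{Kyp}); (ii) to invoke \eqref{eq:asympB} for $(L,\widetilde\Pb)$ you should note, as you do, that the Esscher density preserves absolute continuity of the transition laws and that $(L,\widetilde\Pb)$ is not the negative of a subordinator since $\widetilde\Psi'(0^+)=0$.
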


Note that this lemma  is very similar to Theorem 1.5 of  Yamato \cite{Yam}, in which the author studies the quasi-stationary distributions of $-L$ and assumes implicitly that $\Psi^\prime(0^+)>0$.

\begin{proof}
Observe first that comparing (\ref{eq:asympT0}) and (\ref{eq:asympB}) immediately yields $q_\ast \leq \rho(\infty)$. Also, by definition of $\rho(x)$ as an infimum, we have $W^{(-\rho(\infty))}(x)>0$ for every $x>0$. As a consequence,
\[\rho(\infty) \leq \sup\{q>0, \,W^{(-q)}(x)>0 \text{ for every }x>0\}.\]
To prove the converse inequality, recall from \cite[Chapter 8]{Kyp} that for $q\geq0$ and $a\leq x$, 
\[\E_a\left[e^{-q \tau_x^+}1_{\{\tau_x^+<+\infty\}}\right] = e^{-\Phi(q)(x-a)}.\]
In particular, letting $q\downarrow0$, we deduce that $\Pb_a(\tau_x^+<+\infty) = e^{-\Phi(0)(x-a)}$
hence
\[ \E_0\left[  e^{-\lambda \tau_x^+} \big| \tau_x^+<+\infty \right]  = \frac{ \E_0\left[  e^{-\lambda \tau_x^+}\right]}{\Pb_0(\tau_x^+<+\infty) } = e^{-\Phi(\lambda)x + \Phi(0)x}. \]
Recalling now that Formula (\ref{eq:W}) remains valid for $q\in \mathbb{C}$ and $\lambda >\Phi(|q|)$, we deduce by taking $\lambda$ large enough, since $\Phi$ is increasing, that 
\[
  \int_0^{+\infty}  e^{-\Phi(0) x} W^{(-q)}(x)  \E_0\left[  e^{-\lambda \tau_x^+}\big| \tau_x^+ <+\infty   \right] dx=     \int_0^{+\infty} e^{-\Phi(\lambda) x} W^{(-q)}(x)dx    =  \frac{1}{\lambda+q}.\]
Note that the random variable $\tau_x^+|\tau_x^+<+\infty$ admits a density from Kendall's identity since we have assumed that $L_t$ is absolutely continuous for every $t>0$. Assume now that  $q$ is such that $W^{(-q)}(x)>0$ for every $x>0$.  Applying the Fubini-Tonelli theorem and inverting this Laplace transform gives for a.e. $t>0$:
\[ \int_0^{+\infty} e^{-\Phi(0) x} W^{(-q)}(x) \Pb_0(\tau_x^+ \in dt | \tau_x^+<+\infty) dx = e^{-q t}dt\]
which implies that $q \leq q_\ast$ since conditionally on $\{\tau_x^+<+\infty\}$, the random variable $\tau_x^+$ admits exponential moments of at most $q_\ast$. 
\end{proof}

\subsection{Proof of the ``only if" part of Theorem \ref{theo:1} when $\Psi^\prime(0^+)\geq 0$}
We shall now assume that $\Psi^\prime(0^+)\geq 0$ and prove that there is a strictly positive probability that the branching process does not die out. Note that when $\Psi^\prime(0^+)>0$, as the process $L$ goes a.s. towards $+\infty$, one may simply bound the probability that $\zeta$ is infinite by the probability that one particle never goes below 0: 
\[\Pb_a( \zeta=\infty) \geq \Pb_a(\tau_0^- = +\infty) = \Psi^\prime(0^+)W^{(0)}(a)>0.\]
Assume now that $\Psi^\prime(0^+)=0$. The function $\Psi^\prime$ being continuous and strictly increasing, take $\lambda_\#>0$ small enough such that $0<\Psi^\prime(\lambda_\#)\lambda_\# - \Psi(\lambda_\#)<\beta$. Consider then the branching process $X^\#$, with the same branching mechanism as $X$, but in which the particles evolve as spectrally negative L\'evy processes $L^\#= (L_t - \Psi^\prime(\lambda_\#)t,\, t\geq0)$.
The Laplace exponent of $L_\#$ is given by 
\[\Psi_\#(\lambda) = \Psi(\lambda) - \Psi^\prime(\lambda_\#) \lambda\]
and satisfies $\Psi_\#^\prime(0^+)<0$ and $\Psi_\#^{\prime}(\lambda_\#) =0$. As a consequence, the minimum of $\Psi_\#$ equals $\Psi_\#(\lambda_\#) =  \Psi(\lambda_\#) - \Psi^\prime(\lambda_\#)\lambda_\# > -\beta$. From the first part of the proof, this implies that the branching process $X^\#$ does not die a.s., i.e., $\Pb_a(\zeta_\#=+\infty)>0$ {where $\zeta_\#$ denotes the extinction time of $X^\#$.} Finally, by coupling, as we have added a negative drift, it holds $\Pb_a(\zeta=+\infty)\geq \Pb_a(\zeta_\#=+\infty)>0$ which ends the proof of the converse part of Theorem \ref{theo:1}. \qed

\subsection{Proof of Formula (\ref{eq:asympzeta})}

Observe first that from (\ref{eq:boundv}) and (\ref{eq:asympT0}), we have
\[\frac{1}{t}\ln\Pb_a(\zeta>t) \leq  \beta+ \frac{1}{t}\ln\Pb_a(\tau_0^-\geq t)  \xrightarrow[t\rightarrow +\infty]{} \beta - q_\ast.\]
Conversely, from (\ref{eq:boundv-}),
\[\frac{1}{t}\ln\Pb_a(\zeta>t) \geq \beta(1-\varepsilon)  + \frac{\beta}{t}\left(A(x)  \varepsilon- \int_0^{A(x)} v(x,s)ds \right) + \frac{1}{t}\ln\Pb_a\left(\tau_0^-\wedge \tau_x^+>t\right).\]
Take $x$ large enough such that $\rho(x) \leq q_\ast +\varepsilon$. Passing to the limit as $t\rightarrow +\infty$ and using (\ref{eq:asympB}) yields
\[\liminf_{t\rightarrow +\infty} \frac{1}{t}\ln\Pb_a(\zeta>t) \geq  \beta(1-\varepsilon) -\rho(x) \geq \beta-q^\ast - \varepsilon(1+\beta),\]
and the result follows by letting $\varepsilon \downarrow 0$. \qed

\section{Study of the maximum}

\subsection{An integral equation }

We set 
\[u(a,x) = \Pb_a\left(\textbf{M}\geq x\right)\]
and first prove a Feynman-Kac-like representation for $u$ as in Proposition \ref{prop:zeta}. When dealing with extreme values of branching processes, this idea already appears in Lalley \& Shao \cite{LaSh} in their study of the maximum of a critical symmetric stable branching process.

\begin{proposition}\label{prop:M}
{When $\Psi^\prime(0^+)<0$ and  $\beta\leq q_\ast$,} the function $u$ admits the representation:
\[u(a,x) =  \E_a\left[1_{\{\tau_x^+ < \tau_0^-\}} e^{ \beta \int_0^{\tau_x^+} (1-u(L_{ r }, x))dr}\right],\qquad x\geq a. \]
\end{proposition}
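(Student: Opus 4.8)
The plan is to mimic the proof of Proposition~\ref{prop:zeta}, conditioning on the first branching time $\e$ and applying the branching/Markov property, but now tracking the event $\{{\bf M}\geq x\}$ rather than $\{\zeta>t\}$. Write $u(a,x)=\Pb_a({\bf M}\geq x)$. Starting from $a<x$ (the case $a\geq x$ being trivial since then $u\equiv 1$), decompose according to whether the initial particle reaches level $x$ before the first branching event, before being killed at $0$, or whether it branches first while still in $(0,x)$. Precisely, on $\{\e<\tau_x^+\wedge\tau_0^-\}$ the two offspring start afresh from $L_\e\in(0,x)$ and, by independence, the maximum fails to reach $x$ with probability $(1-u(L_\e,x))^2$; on $\{\tau_x^+<\e\wedge\tau_0^-\}$ the event $\{{\bf M}\geq x\}$ has already occurred; on $\{\tau_0^-<\e\wedge\tau_x^+\}$ the initial line of descent dies before reaching $x$ and contributes nothing. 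This yields
\begin{align*}
1-u(a,x) &= \Pb_a\!\left(\tau_0^-<\e\wedge\tau_x^+\right) + \E_a\!\left[1_{\{\e<\tau_0^-\wedge\tau_x^+\}}\bigl(1-u(L_\e,x)\bigr)^2\right].
\end{align*}
Using $\Pb_a(\tau_0^-<\e\wedge\tau_x^+)=1-\Pb_a(\tau_x^+<\tau_0^-,\,\tau_x^+<\e)-\Pb_a(\e<\tau_0^-\wedge\tau_x^+)$ and expanding the square, this rearranges into the Fredholm-type integral equation
$$u(a,x) = \Pb_a\!\left(\tau_x^+<\tau_0^-,\ \tau_x^+<\e\right) + 2\,\E_a\!\left[1_{\{\e<\tau_0^-\wedge\tau_x^+\}}u(L_\e,x)\right] - \E_a\!\left[1_{\{\e<\tau_0^-\wedge\tau_x^+\}}u^2(L_\e,x)\right].$$

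Next I would set up the operator analogue of $T$: define, for bounded $f$ on $(0,\infty)$,
$$T[f](a) = \E_a\!\left[1_{\{\e<\tau_0^-\wedge\tau_x^+\}}\bigl(2-u(L_\e,x)\bigr)f(L_\e)\right],$$
and $\varphi(a)=\Pb_a(\tau_x^+<\tau_0^-,\ \tau_x^+<\e)$, so that $u=\varphi+T[u]$. Iterating gives $u=\varphi+\sum_{k=1}^{n-1}T^{\circ(k)}[\varphi]+T^{\circ(n)}[u]$, and the remainder $T^{\circ(n)}[u](a)$ tends to $0$ because each application of $T$ carries a factor $\E_a[1_{\{\e<\cdot\}}(2-u)f]$ and, since $2-u\leq 2$ and the first branching time has to fall before the (a.s. finite on the relevant event) exit time $\tau_0^-\wedge\tau_x^+$, one gets geometric decay: on the event $\{\tau_x^+<\tau_0^-\}$ we may bound $\Pb(\e_1+\dots+\e_n<\tau_x^+)$ crudely, while on $\{\tau_x^+>\tau_0^-\}$ one exploits $u(L_\e,x)\le 1$ and the same bookkeeping as in Lemma~\ref{lem:rec}. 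Then the Gamma/uniform representation of the partial sums $\g_k=\e_1+\dots+\e_k$ — conditionally on $\g_n=r$ the first $n-1$ are $r$ times ordered uniforms — lets me collapse the $n$-fold convolution exactly as in the proof of Proposition~\ref{prop:zeta}: the $k$-th term becomes
$$T^{\circ(n)}[\varphi](a) = \E_a\!\left[1_{\{\g_n<\tau_x^+\wedge\tau_0^-\}}\,\Pb_{L_{\g_n}}\!\bigl(\tau_x^+<\tau_0^-,\ \tau_x^+<\e\bigr)\prod_{i=1}^{n}\bigl(2-u(L_{\g_i},x)\bigr)\right]$$
and, after integrating the Poisson epochs of parameter $\beta$ against the path of $L$ on $[0,\tau_x^+)$, the sum telescopes into an exponential, yielding
$$u(a,x) = \E_a\!\left[1_{\{\tau_x^+<\tau_0^-\}}\,e^{\beta\int_0^{\tau_x^+}(1-u(L_r,x))\,dr}\right],$$
since $\int_0^{\tau_x^+}1\,dr$ appears in the exponent together with the $-\,\beta\int u$ terms, exactly mirroring the final computation $v(a,t)=\E_a[1_{\{\tau_0^->t\}}e^{\beta\int_0^t(1-v)\,ds}]$.

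The main obstacle I anticipate is the convergence $T^{\circ(n)}[u]\to 0$: unlike in Proposition~\ref{prop:zeta}, the relevant stopping horizon is the \emph{random} time $\tau_x^+\wedge\tau_0^-$ rather than a deterministic $t$, so I cannot simply restrict to $\{\e<A\}$ for a fixed constant $A$ and extract $(1-e^{-2\beta A})^n$. Instead I would argue that on $\{\tau_x^+<\tau_0^-\}$ the integral $\int_0^{\tau_x^+}(1-u)\,dr$ is finite $\Pb_a$-a.s. (indeed $\tau_x^+<\infty$ there and the integrand is bounded by $1$), so the exponential on the right-hand side is integrable — this needs a short argument, perhaps splitting on $\{\tau_x^+\le A\}$ and controlling the tail $\Pb_a(A<\tau_x^+<\tau_0^-)$ via the exit-time asymptotics \eqref{eq:asympB}, or simply noting that since $\beta\le q_\ast$ one has ${\bf M}<\infty$ a.s.\ by Theorem~\ref{theo:1} so that $u<1$ and a contraction-type bound applies on each fixed level $x$. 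Once integrability and the vanishing of the remainder are secured, the algebraic collapse of the series is routine and identical in spirit to the computation already carried out for $v$. A minor additional check is measurability and boundedness of $a\mapsto u(a,x)$ (monotone in $a$, valued in $[0,1]$), which legitimizes applying $T$ and Fubini–Tonelli throughout.
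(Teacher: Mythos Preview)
Your overall strategy matches the paper's exactly: same first-branching decomposition, same Fredholm operator $T$ with source $\varphi(a,x)=\Pb_a(\tau_x^+\le\e\wedge\tau_0^-)$, same Riemann--Liouville iteration, and the same Gamma/ordered-uniform collapse of $T^{\circ(k)}[\varphi]$ into the exponential.

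The one place where your sketch is not yet a proof is the remainder $T^{\circ(n)}[u]\to 0$, and the fixes you propose do not work as stated. Integrability of $e^{\beta\int_0^{\tau_x^+}(1-u)}$ is the \emph{conclusion}, not an input; and ``$u<1$ so a contraction applies'' fails because the kernel carries the factor $(2-u)\in[1,2]$, not something strictly below $1$. The paper's argument is different and cleaner: it first records the closed form
\[
T^{\circ(k)}[f](a,x)=\frac{\beta^{k-1}}{(k-1)!}\,\E_a\!\left[1_{\{\tau_x^+\wedge\tau_0^->\e\}}(2-u(L_\e,x))\,f(L_\e,x)\Bigl(\int_0^\e(2-u(L_r,x))\,dr\Bigr)^{\!k-1}\right],
\]
then bounds $T^{\circ(n)}[u](a,x)\le \dfrac{(2\beta)^n}{(n-1)!}\,\E_a\!\left[\int_0^{\tau_x^+\wedge\tau_0^-}z^{n-1}e^{-\beta z}\,dz\right]$ and uses \eqref{eq:asympB} to get
\[
T^{\circ(n)}[u](a,x)\le C\,\frac{(2\beta)^{n-1}}{(n-1)!}\left(1+\frac{(n-1)!}{\rho(x)(\beta+\rho(x))^{n}}\right)\xrightarrow[n\to\infty]{}0,
\]
the point being that $\beta\le q_\ast<\rho(x)$ for every fixed $x>0$. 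So it is the $1/(n-1)!$ from the Gamma representation, together with the strict inequality $\rho(x)>\beta$, that kills the remainder --- not any contraction in $u$.
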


\begin{proof}

The proof is similar to that of Proposition \ref{prop:zeta}. Applying the Markov property at the first branching event, we first write:
\begin{align*}
1-u(a,x) &= \Pb_a(\textbf{M}< x, \tau_0^- \leq \e) +  \Pb_a(\textbf{M}<x, \tau_0^- >\e) \\
& =  \Pb_a\left(\tau_0^-< \tau_x^+, \tau_0^-\leq\e\right)+    \E_a\left[1_{\{ \tau_0^-\wedge \tau_x^+>\e\}}\; (1-  u(L_\e, x))^2\right].
\end{align*}
Developing the square yields the non-linear integral equation 
\[u(a,x) = \Pb_a\left(\tau_x^+ \leq \e\wedge \tau_0^-\right) + 2 \E_a\left[1_{\{\tau_0^-\wedge \tau_x^+>\e\}} u(L_\e, x)\right] -  \E_a\left[1_{\{\tau_0^-\wedge \tau_x^+>\e\}}  u^2(L_\e, x)\right].\]
Define the linear operator 
\[T[f](a,x) = \E_a\left[1_{\{\tau_0^-\wedge \tau_x^+>\e\}}  (2-u(L_\e, x)) f(L_\e, x)\right]\]
so that $u$ is a solution of the Fredholm integral equation 
\[u(a,x) =\Pb_a\left(\tau_x^+ \leq \e\wedge \tau_0^-\right)  + T[u](a,x).\]
Setting to simplify the notations
\[ \varphi(a,x) =\Pb_a\left(\tau_x^+ \leq \e\wedge \tau_0^-\right) = \E_a\left[1_{\{\tau_x^+<\tau_0^-\}} e^{-\beta \tau_x^+}\right] \] 
we obtain by iteration that for $n\geq 2$, 
\begin{equation}\label{eq:conv}
u(a,x) =  \varphi(a,x) + \sum_{k=1}^{n-1} T^{\circ(k)}[\varphi](a,x) + T^{\circ(n)}[u](a,x),
\end{equation}
and it remains to evaluate the convolutions. Following the same proof as for Proposition \ref{prop:zeta}, we deduce that for any $k\geq 1$:
\begin{equation}\label{eq:TM}
T^{\circ(k)}[f](a,x) =  \frac{\beta^{k-1}}{(k-1)!} \E_a\left[1_{\{\tau_0^-\wedge \tau_x^+>\e\}}   f\left( L_{\e},x\right) (2-u(L_\e, x))   \left( \int_0^{\e} (2-u(L_{r }, x))dr\right)^{k-1}\right].
\end{equation}
In particular, taking $f=u$, we obtain since $u$ is positive and bounded by 1,  and $\e$ is independent of $\tau_0^-\wedge \tau_x^+$,
\begin{align*}
T^{\circ(n)}[u](a,x) &\leq  2 \frac{(2\beta)^{n-1}}{(n-1)!}    \E_a\left[\e^{n-1} 1_{\{\tau_0^-\wedge \tau_x^+>\e\}} \right]  \\
&=  \frac{(2\beta)^{n}}{(n-1)!} 
\int_0^{+\infty}\Pb_a( \tau_0^-\wedge \tau_x^+> z)  z^{n-1}e^{- \beta z} dz.
\end{align*}
From the asymptotics (\ref{eq:asympB}), there exist two constants $A, C>0$ such that:
\begin{align*}
T^{\circ(n)}[u](a,x)& \leq  \frac{(2\beta)^{n-1}}{(n-1)!} \left(\int_0^A  z^{n-1} e^{-\beta z} dz+ \int_A^{+\infty} z^{n-1} e^{-\beta z}  \,e^{-\rho(x) z} dz\right)\\
 & \leq \frac{(2\beta)^{n-1}}{(n-1)!}      \left( \frac{A^n}{n}+  C \frac{(n-1)!}{(\beta + \rho(x))^{n}}\right)\xrightarrow[n\rightarrow+\infty]{}0
\end{align*}
since, {from Lemma \ref{lem:KY},} $\beta \leq q_\ast < \rho(x)$ for any fixed $x>0$.
As a consequence, we obtain the series formula 
\[u(a,x) =  \varphi(a,x) + \sum_{k=1}^{+\infty} T^{\circ(k)}[\varphi](a,x).\]
Plugging (\ref{eq:TM}) into this series representation and computing the sum  yields, after another application of the Markov property,  
\begin{align*}
u(a,x) &= \Pb_a\left(\tau_x^+ \leq \e\wedge \tau_0^-\right)  + \E_a\left[   1_{\{ \tau_x^+ <\tau_0^-\}}  1_{\{\tau_x^+\geq \e\}} e^{-\beta (\tau_x^+-\e)}(2-u(L_\e, x))  e^{\beta \int_0^{\e}(2-u(L_r, x)) dr  }\right]\\
&=\Pb_a\left(\tau_x^+ \leq \e\wedge \tau_0^-\right)+\int_0^{+\infty} \E_a\left[  1_{\{\tau_x^+< \tau_0^-\}} 1_{\{ \tau_x^+ \geq s\}}   e^{-\beta \tau_x^+} (2-u(L_s, x)) e^{\beta \int_0^{s}(2-u(L_r, x))dr  }\right] \beta  ds\\
&=\E_a\left[  1_{\{\tau_x^+< \tau_0^-\}}   e^{\beta \int_0^{\tau_x^+}(1-u(L_r, x))dr  }\right],
\end{align*}
which ends the proof of Proposition \ref{prop:M}.
\end{proof}

\begin{remark}
If we denote by $T_x$ the first time when a particle of the branching process $X$ hits the level $x$, we have $\Pb_a(\textbf{M}\geq x) = \Pb_a(\zeta > T_x)$. As a consequence, it is not surprising that the representation result of Proposition \ref{prop:M} is similar to that of Proposition \ref{prop:zeta} but with $t$ replaced by $\tau_x^+$.
\end{remark}

\subsection{Proof of Theorem \ref{theo:2}}
We now study the limit of $u(a,x)$ as $x\rightarrow +\infty$, and thus assume throughout this section that $x\geq a$. Let us recall the classical Esscher transform. 
For $c\geq -q_\ast$, we denote by $\Pb_a^{(c)}$ the probability defined by
\begin{equation}\label{eq:change}
\frac{\text{d}\Pb_a^{(c)}}{\text{d}\Pb_a}\Bigg|_{\F_t} = e^{\Phi(c) (L_t-a) -c t},
\end{equation}
{where $(\F_t)$ denotes the natural filtration of $L$.} Under $\Pb_a^{(c)}$, the process $L$ is still a spectrally negative L\'evy process starting from $a$ but with Laplace exponent $\Psi_{c}(\lambda) = \Psi(\lambda +\Phi(c)) -c$. In particular,  since $\Psi_{c}^\prime(0) = \Psi^\prime(\Phi(c))\geq 0$, the process $L$ no longer drifts a.s. towards $-\infty$, {see Bertoin \cite[Chapter VII, Corollary 2]{Ber}.
Denoting by $W_{c}^{(q)}$ its scale function, {we have from (\ref{eq:W}) for $q\geq0$:}
\[\int_0^{+\infty}e^{-\lambda x} W_{c}^{(q)}(x)dx = \frac{1}{ \Psi(\lambda +\Phi(c)) -c-q},\qquad \lambda > \Phi(q+c) - \Phi(c),\]
which implies, comparing the definition of $W_{c}^{(q)}$ and $W^{(q+c)}$, that
\[W_{c}^{(q)}(x) = e^{\Phi(c) x}W^{(c+q)}(x), \qquad x\geq0.\] }
Applying (\ref{eq:change}) to Proposition \ref{prop:M} and using the absence of positive jumps yields
\[e^{\Phi(-\beta)a}  \E_a^{(-\beta)}\left[ e^{-\int_0^{\tau_x^+} u(L_r,x) dr} 1_{\{\tau_x^+<\tau_0^-\}}\right] =  \E_a\left[e^{\Phi(-\beta)x  + \beta \tau_x^+} e^{-\int_0^{\tau_x^+} u(L_r,x) dr}1_{\{\tau_x^+<\tau_0^-\}}    \right]\]
 {where $ \E_a^{(-\beta)}$ denotes the expectation under $ \Pb_a^{(-\beta)}$.}
Using the invariance by translation of L\'evy processes, this is further equal to
\begin{equation}\label{eq:appchange}
e^{\Phi(-\beta)(a-x)}u(a,x) =  
\E_0^{(-\beta)}\left[e^{-\beta \int_0^{\tau_{x-a}^+} u(a+L_r,x)dr}1_{\{\tau_{x-a}^+ < \tau_{-a}^-\}}  \right]. 
\end{equation}
Note that under $\Pb_0^{(-\beta)}$ the random variable $\tau_{x-a}^+$ is a.s. finite. As a consequence, one may apply the time reversal result of  Bertoin \cite[Chapter VII, Th. 18]{Ber} to obtain 
 \begin{equation}\label{eq:tg}
 \E_0^{(-\beta)}\left[e^{-\beta \int_0^{\tau_{x-a}^+} u(a+L_r,x)dr}1_{\{\tau_{x-a}^+ < \tau_{-a}^-\}}  \right]=\E_0^{(-\beta)\uparrow}\left[e^{-\beta \int_0^{g_{x-a}^-} u(x-L_{r},x)dr}1_{\{g_{x-a}^- < \tau_{x}^+\}}  \right]
 \end{equation}
 where $g_x^-$ denotes the last passage time of $L$ below the level $x$, i.e., $g_x^- = \sup\{t\geq0,\, L_t\leq x\}$, and $\Pb_a^{(-\beta)\uparrow}$ denotes the law of $L$ (under $\Pb_a^{(-\beta)}$) conditioned to stay positive, which is defined by:
 \begin{equation}\label{eq:Wup}
 \forall \Lambda_t \in \F_t,\qquad \quad \Pb_a^{(-\beta)\uparrow}(\Lambda_t) = \frac{1}{W_{-\beta}^{(0)}(a)}\E_a^{(-\beta)}\left[W_{-\beta}^{(0)}(L_t) 1_{\{\tau_0^->t\}} 1_{\Lambda_t}\right].\end{equation}
Applying the strong Markov property, we then obtain the lower bound
\begin{multline}\label{eq:lower}
 \E_0^{(-\beta)\uparrow}\left[e^{-\beta \int_0^{g_{x-a}^-} u(x-L_{r},x)dr}1_{\left\{\inf\limits_{s\geq \tau_x^+}L_s >x-a   \right\}}  \right]\\\geq \E_0^{(-\beta)\uparrow}\left[e^{-\beta \int_0^{\tau_x^+} u(x-L_{r},x)dr}\right]\Pb_x^{(-\beta)\uparrow}\left(\inf\limits_{s\geq 0}L_s >x-a   \right).
\end{multline}
To compute the last term, we come back to the absolute continuity formula (\ref{eq:Wup}):
\[ \Pb_x^{(-\beta)\uparrow}\left(\inf\limits_{s\leq t}L_s >x-a   \right) = \Pb_x^{(-\beta)\uparrow}\left(\tau_{x-a}^-> t \right)=\E_x^{(-\beta)}\left[ \frac{W_{-\beta}^{(0)}(L_{t})}{W_{-\beta}^{(0)}(x)}1_{\{t<\tau_0^-\}} 1_{\{t< \tau_{x-a}^-\}}    \right].\]
Since  $x> x-a\geq 0$, we have $\{t<\tau_0^-\}\cap\{t<\tau_{x-a}^-\} = \{t<\tau_{x-a}^-\}$, and by translation, this is further equal to
\[\E_a^{(-\beta)}\left[ \frac{W_{-\beta}^{(0)}(x-a+L_{t})}{W_{-\beta}^{(0)}(x)}1_{\{t<\tau_{0}^-\}}    \right]  = \frac{W_{-\beta}^{(0)}(a)}{W_{-\beta}^{(0)}(x)}\E_a^{(-\beta)\uparrow}\left[ \frac{W_{-\beta}^{(0)}(x-a+L_{t})}{W_{-\beta}^{(0)}(L_t)}   \right].\]
We now let $t\rightarrow+\infty$ and use the fact that $L$ goes to $+\infty$ a.s. under $\Pb_a^{(-\beta)\uparrow}$ {(see Bertoin \cite[Chapter VII, Lemma 12]{Ber})}. Recall from Hubalek \& Kyprianou \cite[Section 3]{HuKy} that the asymptotics of $W_{-\beta}^{(0)}$ are given by 
\begin{equation}\label{eq:asympW}
\begin{cases}
\displaystyle \lim_{z\rightarrow +\infty} W_{-\beta}^{(0)}(z) =\frac{1}{\Psi^\prime_{-\beta}(0^+)} =\frac{1}{ \Psi^\prime(\Phi(-\beta))}<+\infty  &\qquad \text{ if }\beta<q_\ast,\\
\displaystyle W_{-q_\ast}^{(0)}(z) \equi_{z\rightarrow+\infty} \frac{2}{\Psi^{\prime\prime}(\lambda_\ast)} z &\qquad \text{ if }\beta=q_\ast.
\end{cases}
\end{equation}
As a consequence, we obtain in both cases, {applying the dominated convergence theorem,}
\begin{equation}\label{eq:Pbuparrow}
\Pb_x^{(-\beta)\uparrow}\left(\inf\limits_{s\geq 0}L_s >x-a   \right) = \frac{W_{-\beta}^{(0)}(a)}{W_{-\beta}^{(0)}(x)}.  
\end{equation}
{Plugging together (\ref{eq:appchange}), (\ref{eq:tg}), (\ref{eq:lower}) and  (\ref{eq:Pbuparrow})}, we thus deduce from Fatou's lemma that
\begin{equation*}
\liminf_{x\rightarrow+\infty} W_{-\beta}^{(0)}(x)e^{\Phi(-\beta)(a-x)}u(a,x) \geq W_{-\beta}^{(0)}(a) \E_0^{(-\beta)\uparrow}\left[   \liminf_{x\rightarrow +\infty}e^{-\beta \int_0^{+\infty} u(x-L_r,x)dr}\right].
\end{equation*}
Observe next that by translation, $u(x-a,x)$ corresponds to the probability that a branching process starting at $0$, and where the particles are killed at the level $a-x$, reaches the level $a$ before dying. Letting $x\rightarrow +\infty$, we deduce that 
 $u(x-a,x) \xrightarrow[x\rightarrow +\infty]{}\Pb_0( \mathcal{M}\geq a)$
 where $\mathcal{M}$ is the all-time maximum of a free spectrally negative branching L\'evy process. By monotone convergence, we have thus obtained 
 \begin{equation}\label{eq:Binf}
\liminf_{x\rightarrow+\infty} W_{-\beta}^{(0)}(x)e^{\Phi(-\beta)(a-x)}u(a,x) \geq W_{-\beta}^{(0)}(a) \E_0^{(-\beta)\uparrow}\left[   e^{-\beta \int_0^{+\infty} \Pb_0(\mathcal{M}\geq L_r) dr}\right]
\end{equation}
 which gives a lower bound, provided the expectation on the right-hand side is non-null. \\
 
  To get an upper bound, we start back from (\ref{eq:tg}) and take $A>0$. On the one hand, we write:
  \begin{align*}
  \E_0^{(-\beta)\uparrow}\left[e^{-\beta \int_0^{g_{x-a}^-} u(x-L_{r},x)dr}1_{\left\{\inf\limits_{s\geq \tau_x}L_s >x-a   \right\}} 1_{\{g_{x-a}^-\leq A\}} \right]&\leq 
  \Pb_0^{(-\beta)\uparrow}\left(g_{x-a}^-\leq A\right)
\end{align*}
On the other hand, we have 
 \begin{align*}
  \E_0^{(-\beta)\uparrow}\left[e^{-\beta \int_0^{g_{x-a}^-} u(x-L_{r},x)dr}1_{\left\{\inf\limits_{s\geq \tau_x^+}L_s >x-a   \right\}} 1_{\{g_{x-a}^-> A\}} \right]&\leq 
  \E_0^{(-\beta)\uparrow}\left[e^{-\beta \int_0^{A} u(x-L_{r},x)dr}1_{\left\{\inf\limits_{s\geq \tau_x^+}L_s >x-a   \right\}} \right].
\end{align*}
Decomposing further according to whether $\tau_x^+\leq A$ or $\tau_x^+>A$ and applying the strong Markov property, we deduce that 
\begin{multline*}\E_0^{(-\beta)\uparrow}\left[e^{-\beta \int_0^{A} u(x-L_{r},x)dr}1_{\left\{\inf\limits_{s\geq \tau_x^+}L_s >x-a   \right\}}  \right]\\
\leq  \Pb_0^{(-\beta)\uparrow}(\tau_x^+\leq A) +   \E_0^{(-\beta)\uparrow}\left[e^{-\beta \int_0^{A}u(x-L_r, x)dr} 1_{\{A< \tau_x^+\}}\right]\Pb_x^{(-\beta)\uparrow}\left(\inf\limits_{s\geq 0}L_s >x-a   \right).
\end{multline*}
Multiplying both sides by $W_{-\beta}^{(0)}(x)$ and letting $x\rightarrow +\infty$, we obtain using (\ref{eq:Pbuparrow}) and the dominated convergence theorem that :
\begin{multline}\label{eq:Bsup}
\limsup_{x\rightarrow+\infty} W_{-\beta}^{(0)}(x)e^{\Phi(-\beta)(a-x)}u(a,x)\\ 
\leq \limsup_{x\rightarrow+\infty} W_{-\beta}^{(0)}(x)   \left( \Pb_0^{(-\beta)\uparrow}\left(g_{x-a}^-\leq A\right) +  \Pb_0^{(-\beta)\uparrow}(\tau_x^+\leq A)\right) \\+ 
\E_0^{(-\beta)\uparrow}\left[e^{-\beta \int_0^{A} \Pb_0\left(\mathcal{M}\geq L_r\right)dr}\right]W_{-\beta}^{(0)}(a) 
\end{multline}
and it remains to show that the first term on the right-hand side is null. When $\beta<q_\ast$, this is a consequence of (\ref{eq:asympW}) and of the fact that  $L$ goes a.s. to $+\infty$ under $\Pb_0^{(-\beta)\uparrow}$.
When $\beta=q_\ast$ a little more work is required as $W_{-q_\ast}^{(0)}$ goes linearly to $+\infty$. On the one hand, the time reversal argument of \cite[Theorem 18, p.206]{Ber} implies that
\[\Pb_0^{(-q_\ast)\uparrow}(g_{x-a}^-\leq A) = \Pb_0^{(-q_\ast)}(\tau_{x-a}^+\leq A) \leq e^{q_\ast A} \E_0^{(-q_\ast)}\left[e^{-q_\ast\tau_{x-a}^+}\right]= e^{q_\ast A} e^{-(x-a) (\Phi(0)-\Phi(-q_\ast))}  \]
so it indeed holds that
$\displaystyle \lim\limits_{x\rightarrow+\infty} W_{-q_\ast}^{(0)}(x) \Pb_0^{(-q_\ast)\uparrow}(g_{x-a}^-\leq A) =0.$
On the other hand, we may write similarly
\[\limsup\limits_{x\rightarrow+\infty} W_{-q_\ast}^{(0)}(x) \Pb_0^{(-q_\ast)\uparrow}(\tau_x^+\leq A) \leq e^{q_\ast A}\limsup\limits_{x\rightarrow+\infty} W_{-q_\ast}^{(0)}(x)  \E_0^{(-q_\ast)\uparrow}\left[e^{-q_\ast \tau_x^+}\right] 
\]
which leads us to study the Laplace transform of $\tau_x^+$ under $\Pb_0^{(-q_\ast)\uparrow}$.
To do so, take $\varepsilon>0$ and observe that using the absolute continuity formula (\ref{eq:Wup})
\[\E_\varepsilon^{(-q_\ast)\uparrow}\left[e^{-q_\ast \tau_x^+\wedge t}\right] = \frac{1}{W_{-q_\ast}^{(0)}(\varepsilon)}\E_\varepsilon^{(-q_\ast)}\left[W_{-q_\ast}^{(0)}(L_{\tau_x^+\wedge t})e^{-q_\ast \tau_x^+\wedge t}  1_{\{\tau_0^- >\tau_x^+\wedge t\}}\right].\]
Letting $t\rightarrow+\infty$ and applying the dominated convergence theorem since $W_{-q_\ast}^{(0)}$ is increasing and $L$ has no positive jumps, we deduce from \cite[Theorem 8.1]{Kyp} that
\begin{equation}\label{eq:Evareps}
\E_\varepsilon^{(-q_\ast)\uparrow}\left[e^{-q_\ast \tau_x^+}\right]  = \frac{W_{-q_\ast}^{(0)}(x)}{W_{-q_\ast}^{(0)}(\varepsilon) }  \E_\varepsilon^{(-q_\ast)}\left[e^{-q_\ast \tau_x^+} 1_{\{\tau_0^->\tau_x^+\}}\right]  
=\frac{W_{-q_\ast}^{(0)}(x)}{W_{-q_\ast}^{(0)}(\varepsilon) } \frac{W^{(q_\ast)}_{-q_\ast}(\varepsilon)}{W^{(q_\ast)}_{-q_\ast}(x)}.
\end{equation}
Recall next the bound for $\varepsilon>0$, see the proof of \cite[Lemma 8.3]{Kyp}, 
\[\frac{W^{(q_\ast)}_{-q_\ast}(\varepsilon)}{W_{-q_\ast}^{(0)}(\varepsilon)} \leq  \sum_{k\geq0} q_\ast^{k} \frac{\varepsilon^k}{k!}\left(W_{-q_\ast}^{(0)}(\varepsilon)\right)^{k}.\]
Then, letting $\varepsilon\downarrow0$ in (\ref{eq:Evareps}), we conclude from the asymptotics in \cite[Section 3]{HuKy} that 
\begin{align*}
\limsup\limits_{x\rightarrow+\infty} W_{-q_\ast}^{(0)}(x) \Pb_0^{(-q_\ast)\uparrow}(\tau_x^+\leq A)  &\; \leq\; e^{q_\ast A}   \limsup\limits_{x\rightarrow+\infty} W_{-q_\ast}^{(0)}(x) \frac{W_{-q_\ast}^{(0)}(x)}{W^{(q_\ast)}_{-q_\ast}(x) }\\
&\equi_{x\rightarrow+\infty} e^{q_\ast A}   \Psi^\prime(\Phi(0)) (W_{-q_\ast}^{(0)}(x))^2 e^{-x (\Phi(0)-\Phi(-q_\ast))} 
 \end{align*}
which also converges to 0. As a consequence, we have obtained the bound for $\beta\leq q_\ast$:
\begin{equation}\label{eq:Bsup2}
\limsup_{x\rightarrow+\infty} W_{-\beta}^{(0)}(x)e^{\Phi(-\beta)(a-x)}u(a,x)\leq
\E_0^{(-\beta)\uparrow}\left[e^{-\beta \int_0^{A} \Pb_0\left(\mathcal{M}\geq L_r\right)dr}\right]W_{-\beta}^{(0)}(a). 
\end{equation}
Letting $A\uparrow +\infty$ in (\ref{eq:Bsup2}) and gathering (\ref{eq:Binf}) and (\ref{eq:Bsup2}), we have thus proven that
\[\lim_{x\rightarrow+\infty} W_{-\beta}^{(0)}(x)e^{\Phi(-\beta)(a-x)}u(a,x) = W_{-\beta}^{(0)}(a) \E_0^{(-\beta)\uparrow}\left[e^{-\beta \int_0^{+\infty} \Pb_0\left(\mathcal{M}\geq L_r\right)dr}\right]\]
which is Theorem \ref{theo:2} where, from (\ref{eq:asympW}), 
\begin{multline*}
 \kappa_\beta =  \Psi^\prime(\Phi(-\beta)) \E_0^{(-\beta)\uparrow}\left[e^{-\beta \int_0^{+\infty} \Pb_0\left(\mathcal{M}\geq L_r\right)dr}\right]  \\\text{and}\qquad \kappa_{q_\ast} = \frac{\Psi^{\prime\prime}(\lambda_\ast)}{2} \E_0^{(-q^\ast)\uparrow}\left[e^{-q^\ast \int_0^{+\infty} \Pb_0\left(\mathcal{M}\geq L_r\right)dr}\right].\end{multline*}
\noindent
It finally remains to check that the expectations on the right-hand sides are not null. To do so, we first prove that $\mathcal{M}$ is a.s. finite.
Since $u$ is positive, we deduce from Proposition \ref{prop:M} that
 \begin{equation}\label{eq:uxax}
 u(x-a,x) \leq \E_{x-a}\left[1_{\{\tau_x^+<\tau_0^-\}}e^{\beta \tau_x^+}\right]. 
 \end{equation}
Then, from the Esscher transform (\ref{eq:change}), we have since $\beta \leq q_\ast$:
\[  \E_{x-a}\left[e^{\Phi(-\beta) (L_{\tau_x^+}-(x-a)) +\beta \tau_x^+} 1_{\{\tau_x^+<\tau_0^-\}}1_{\{\tau_x^+\leq t\}}\right]\leq \Pb_{x-a}^{(-\beta)}\left(\tau_x^+<\tau_0^-\wedge t\right).\]
Passing to the limit as $t\rightarrow+\infty$, we deduce from the monotone convergence theorem  that 
\[ e^{\Phi(-\beta) a}   \E_{x-a}\left[e^{\beta \tau_x^+} 1_{\{\tau_x^+<\tau_0^-\}}\right]\leq \Pb_{x-a}^{(-\beta)}\left(\tau_x^+<\tau_0^-\right) \leq 1 .\]
Then, going back to (\ref{eq:uxax}) and letting $x\rightarrow +\infty$, we conclude that
 \begin{equation}\label{eq:boundMf}
 \Pb_0( \mathcal{M}\geq a) \leq e^{- \Phi(-\beta)a}<1
 \end{equation}
 which shows that $\mathcal{M}$ is a.s. finite when $\beta\leq q_\ast$. Now, to prove that $\kappa_\beta$ is non-null, it is sufficient from (\ref{eq:boundMf}) to check that 
$\int_0^{+\infty} e^{-\Phi(-\beta)L_r}dr<+\infty$ a.s. under $\Pb_0^{(-\beta)\uparrow}$. Applying the Fubini-Tonelli theorem and Bertoin \cite[Chapter VII.3, Cor.16]{Ber}, we have 
\[\E_0^{(-\beta)\uparrow}\left[\int_0^{+\infty} e^{-\Phi(-\beta)L_r}dr\right] = \int_0^{+\infty} \int_0^{+\infty} e^{-\Phi(-\beta) x} \frac{x W_{-\beta}^{(0)}(x)}{r}\Pb_0^{(-\beta)}(L_r\in dx) dr.\] 
Using  Kendall's identity, $r\Pb_0^{(-\beta)}(\tau_x^+\in dr) dx=  x \Pb_0^{(-\beta)}(L_r\in dx)dr$, this is further equal to 
\[ \int_0^{+\infty} \int_0^{+\infty} e^{-\Phi(-\beta) x}  W_{-\beta}^{(0)}(x)\Pb_0^{(-\beta)}(\tau_x^+\in dt)  dx =  \int_0^{+\infty} e^{-\Phi(-\beta) x}  W_{-\beta}^{(0)}(x)dx<+\infty\]
since $\tau_x^+$ is a.s. finite under $\Pb_0^{(-\beta)}$ and  the asymptotics of $W_{-\beta}^{(0)}$ are given by (\ref{eq:asympW}).\qed

\subsection{Proof of Corollary \ref{cor}}

Let us set $m(a) = \Pb_0(\mathcal{M}>a)$. Replacing $a$ by $x-a$  in Proposition \ref{prop:M}, we deduce as before by translation that 
\[ u(x-a,x) =  \E_{x-a}\left[1_{\{\tau_x^+ < \tau_0^-\}} e^{ \beta \int_0^{\tau_x^+} (1-u(L_{ r }, x))dr}\right]= \E_{0}\left[1_{\{\tau_a^+ < \tau_{a-x}^-\}} e^{ \beta \int_0^{\tau_a^+} (1-u(x-a+L_{ r }, x))dr}\right].  \]
Passing to the limit as $x\rightarrow+\infty$ and applying the dominated convergence theorem  we conclude that
\[m(a) = \E_0\left[1_{\{\tau_a^+ <+\infty\}} e^{\beta \int_0^{\tau_a^+} (1-m(a-L_s))ds}   \right] = e^{-\Phi(-\beta) a} \E_0^{(-\beta)}\left[1_{\{\tau_a^+ <+\infty\}} e^{-\beta \int_0^{\tau_a^+} m(a-L_s)ds}    \right]\]
where the last equality follows from the Esscher transform.
Since $\Pb_0^{(-\beta)}(\tau_a^+<+\infty)=1$, the time reversal result \cite[Chapter VII, Th. 18]{Ber} yields 
\[m(a) =  e^{-\Phi(-\beta) a} \E_0^{(-\beta)\uparrow}\left[e^{-\beta \int_0^{g_a^-} m(L_s)ds}    \right].\]
The result then follows by letting $a\rightarrow +\infty$, since, as before, the expectation on the right-hand side converges towards a strictly positive constant.

\qed

\noindent
\textbf{Acknowledgments.}
We are grateful to the referee whose very careful reading and suggestions help to improve the presentation of the paper.

\addcontentsline{toc}{section}{References}

\end{document}